\def\BB{\mathcal{B}}
\def\JJ{\mathcal{J}}
\def\UU{\mathcal{U}}
\def\integer{{\mathbb{Z}}}
\def\real{{\mathbb{R}}}
\def\nat{{\mathbb{N}}}
\def\Ker{{\mathrm{Ker}\, }}
\def\intt{{\mathrm{int}}\, }
\begin{document}
\author{Viviane Baladi and Daniel Smania}

\address{D.M.A., UMR 8553, \'Ecole Normale Sup\'erieure,  75005 Paris, France}
\email{viviane.baladi@ens.fr}
\urladdr{www.dma.ens.fr/$\sim$baladi/}

\address{
Departamento de Matem\'atica,
ICMC-USP, Caixa Postal 668,  S\~ao Carlos-SP,
CEP 13560-970
S\~ao Carlos-SP, Brazil}
\email{smania@icmc.usp.br}
\urladdr{www.icmc.usp.br/$\sim$smania/}
\date{\today} 
\title{Smooth deformations of piecewise expanding unimodal maps}
\begin{abstract} In the space of $C^k$ piecewise expanding unimodal maps, $k\geq 1$, we characterize  the $C^1$ smooth families of maps  where the topological dynamics does not change (the ``smooth deformations")  as the families tangent to a continuous distribution of codimension-one subspaces (the ``horizontal" directions)  in that space.  Furthermore such codimension-one subspaces are  defined as the kernels of  an
explicit class of linear functionals. As a consequence we show the existence of $C^{k-1+Lip}$ deformations tangent to every  given $C^k$ horizontal  direction, for $k\ge 2$.  
\end{abstract}
\thanks{V.B. is partially supported by ANR-05-JCJC-0107-01.
D.S. is partially supported by CNPq 470957/2006-9
and 310964/2006-7, FAPESP 2003/03107-9. V.B. did part of this work
at UMI 2924 CNRS-IMPA, Rio de Janeiro.}
\maketitle

\newcommand{\co}{\mathbb{C}}
\newcommand{\incl}[1]{i_{U_{#1}-Q_{#1},V_{#1}-P_{#1}}}
\newcommand{\inclu}[1]{i_{V_{#1}-P_{#1},\co-P}}
\newcommand{\func}[3]{#1\colon #2 \rightarrow #3}
\newcommand{\norm}[1]{\left\lVert#1\right\rVert}
\newcommand{\norma}[2]{\left\lVert#1\right\rVert_{#2}}
\newcommand{\hiper}[3]{\left \lVert#1\right\rVert_{#2,#3}}
\newcommand{\hip}[2]{\left \lVert#1\right\rVert_{U_{#2} - Q_{#2},V_{#2} - P_{#2}}}
\newtheorem{lem}{Lemma}[section]
\newtheorem{defi}{Definition}[section]
\newtheorem{rem}{Remark}[section]
\newtheorem{mle}{Main Lemma}[section]
\newtheorem{mth}{Main Theorem}
\newtheorem{thm}{Theorem}
\newtheorem{theorem}{Theorem}[section]
\newtheorem{lemma}[theorem]{Lemma}
\newtheorem{corollary}[theorem]{Corollary}
\newtheorem{remark}{\it Remark\/}
\newtheorem{example}{\it Example\/}

\newtheorem{proposition}{Proposition}[section]

\newcommand{\comment}[1]{}

\newtheorem{cor}{Corollary}[section]
\newtheorem{pro}{Proposition}[section]
\newtheorem{conj}{Conjecture}
\setcounter{tocdepth}{1}

\section{Introduction}

The topological class of a dynamical system $f$  is the set of all maps topologically conjugate with $f$. A {\it smooth deformation}  of a dynamical system $f_0$  is a smooth family  of dynamical systems $t\mapsto f_t$ inside the topological class of $f_0$.  We
also  say that  a smooth deformation $f_t$ is a family with ``no bifurcations." Deciding whether or not there are bifurcations in a  family is one of the primary problems concerning dynamical systems.

In the theory of complex dynamical systems, specially for rational functions, this type of study was very sucessful. One of the most  powerful tools in complex dynamics are the quasiconformal methods: quasiconformal maps, quasiconformal vector fields, and holomorphic motions.  For example, they allow us to easily find a holomorphic  deformation between two holomorphic dynamical systems which are conjugate by a quasiconformal map, using the so-called Beltrami path.  Beltrami paths are examples of {\it holomorphic motions,} whose importance in complex dynamics can not be overstated since the time they were introduced in the seminal work by Ma\~n\'e, Sad and Sullivan \cite{MSS}.  Holomorphic motions are a key tool in the characterization of structurally stable rational maps and families of rational maps  with no bifurcations \cite{MSS} (see also \cite{MS}). The  study of the regularity of  hybrid classes of  quadratic-like maps \cite{L}  and topological classes of  analytic unimodal maps \cite{ALM} also depends heavily on quasiconformal methods.

Unfortunately quasiconformal methods do not seem to be  applicable for real one-dimensional maps which do not have a holomorphic extension to the complex plane, like  piecewise expanding  $C^2$ unimodal maps (see Section ~2 for formal definitions). 

In our first main result, Theorem \ref{equiv} (Section ~4), we characterize all smooth families in the space of piecewise expanding unimodal maps which  are smooth deformations: they are precisely the families  tangent  to a continuous distribution of codimension-one subspaces in that space. Following the notation in \cite{L}, these subspaces will be called ``horizontal directions."
See Section 3 for the definition of the linear functional $J(f,\cdot)$ whose kernel defines
the horizontal directions.  

We observe that for families of smooth unimodal maps, the condition  $$J(f_{t_0},\partial_t f_t|_{t=t_0})\neq 0$$  is the  ``nondegeneracy condition" for a family $f_t$ at a Collet-Eckmann parameter $f_{t_0}$ that  appeared in a generalization of Jakobson's Theorem by Tsujii \cite{T}. On the other hand, the condition $J(f, v)=0$ for $v$ to be horizontal is the same
which is well-known for  smooth unimodal maps satisfying certain summability condition (see e.g. \cite{av}, \cite{ALM}). This condition
first appeared in the context of piecewise expanding maps in \cite{Ba}.

One can wonder if there {\it exist}   deformations of a given piecewise expanding unimodal map which are non trivial, i.e., so that the $f_t$ are not smoothly conjugate to
$f_0$ . (Not only because this is an intrisically natural question, but also because it
recently became clear that this is crucial to understand some dynamically
defined quantities, see below.)
We  answer this question in Theorem \ref{create} (Section~5).  In particular,  for each ``good" piecewise expanding unimodal map $f_0$ and each horizontal direction $v$, we  construct a smooth deformation of $f_0$ tangent to $v$ at $f_0$, i.e.,
$\partial_t f_t|_{t=0}=v$. 
In other words,  Theorem~ \ref{create} shows that the theory of smooth
deformations is very rich, since there are plenty of deformations of a piecewise expanding unimodal map  in  ``horizontal" directions.

In both theorems we heavily use  ``smooth motions," that is, we exploit
the fact that the conjugacies $h_t$ depend smoothy on $t$.
In Theorem \ref{equiv}, $B\Rightarrow C$  (see \cite{BS}), we use them in the phase space,  and in Theorem~ \ref{create}, in the parameter space.

Given a smooth family of dynamical systems, one can ask how  dynamically defined  quantities,
such as the  average of a given observable with respect to  the SRB measures, the Lyapunov exponents, and the  Hausdorff dimension of invariant sets,
change along this family. Studying smoothness of these quantities can be a tricky issue.  For example, SRB measures are often described as  eigenvectors of Ruelle-Perron-Frobenius operators acting on infinite-dimensional spaces with a complicated structure.

In the case of piecewise expanding  maps,   H\"older continuity of SRB measures (for all exponents $<1$) has been known for a long time
\cite{Ke}. However any hope of higher regularity for families transversal to the topological class was annihilated  by the examples in \cite{Ba} (see also \cite{MM}). In order to  have a satisfactory  theory about smooth variation  of  dynamically defined quantities, at least in the case of  the SRB measure of piecewise expanding unimodal maps, it was recently
discovered that 
we need to restrict ourselves to families tangent to topological classes \cite{BS}.
Theorem ~ \ref{create} and its corollaries imply the
result announced as  Theorem 2.8 in \cite{BS}. This result was not used to obtain
the other claims in ~\cite{BS},
but it shows that there are plenty of families satisfying the restriction of tangency to the
topological class needed there.

\section{Preliminaries}

Denote $I=[-1,1]$ and $\mathbb{N}=\integer_+$.

For $k \ge 0$, we define 
the set $\mathcal{B}^{k}(I)$ of {\it piecewise $C^ k$ functions} to be
the linear space   of  continuous functions 
$f\colon I \to \mathbb{R}$ such that $f$ is $C^{k}$ on the intervals 
$[-1,0] $ and  $[0,1]$, with $f(1)=f(-1)$.
Then $\mathcal{B}^{k}(I)$ is a Banach space for the norm
$$|f|_k = \max \{|f|_{C^{k}[-1,0]}, |f|_{C^{k}[0,1]}  \},
\mbox{ where }
|f|_{C^{k}(Q)} = \max_{0\le i\leq k}  \{ |D^i f|_{L^\infty(Q)}\}.$$

For $k\ge 1$, we define the set  $\mathcal{U}^k$  of
{\it piecewise expanding $C^ k$ unimodal} maps 
to be the set  of maps  $f \in \mathcal{B}^k(I)$ such that 
\begin{itemize} 
\item[I.]{\it (Invariance of $\partial I$)} $f(-1)=f(1)=-1$.
\item[II.]{\it (Expanding condition)} $\inf_{x \in [-1,0]}  Df(x) > 1$ and 
$\sup_{x \in [0,1]}  Df(x) < -1$.
\item[III.]{\it (Invariance of $I$)}  $f(0)\leq 1$
(by I.--II. this implies $f(I)\subset I$).
\end{itemize} 
The set $\mathcal{U}^k$ is a convex subset of  the affine subspace $$\{ f \in \mathcal{B}^k(I) \text{ s.t. } f(-1)=f(1)=-1\} $$ and
$\mathcal{U}^k\cap \{ f \in  \mathcal{B}^k(I)\colon \ f(0) < 1\}$
is a convex and open set of the same affine subspace.
We call elements of $\mathcal{U}^1$ simply {\it piecewise expanding  unimodal maps.}
The point $c=0$ is called the {\it critical point} of a piecewise expanding
unimodal map. Set  
$$\lambda_f = \min_x |Df(x)| > 1. $$
The itinerary of  $x\in I$
for  a piecewise expanding unimodal map $f$ is the 
sequence 
$$(\sigma_0(x),\sigma_1(x),\sigma_2(x),\dots)  \in \{L,C,R\}^{\mathbb{N}}$$
such that  $\sigma_i(x)=L$ if $f^i(x) < c$, $\sigma_i(x)=C$ if $f^i(x)=c$, 
and  $\sigma_i(x)=R$ if $f^i(x) >  c$.
We write $\sigma_i=\sigma_i(c)$.

Let $1\le j \le k$, with $k$ an integer,
and $j$ either an integer or $j=k-1+Lip$.
A  {\it $C^j$ family} $f_t$ of  piecewise expanding $C^k$ unimodal
maps is a $C^j$ map
\begin{equation}
\label{families}
t\mapsto f_t \mbox{ from } [-\delta,\delta] \mbox{ to } \UU^k  .
\end{equation}
(In particular, for such a family, the map $(t,x)\mapsto f_t(x)$ 
is continuous on $[-\delta,\delta]\times I$, and it is $C^{j}$ on the sets
$[-\delta,\delta]\times [-1,0]$ and $ [-\delta,\delta]\times [0,1]$.)

If $f_t$ is a family of piecewise expanding unimodal  maps, we consider
$$R_t:=\{ (i,j) \in \nat \times \nat\colon \  f^i_t(c)=f^j_t(c) \ \mbox{and} \  i < j   \},$$
the set of {\it critical relations}  of $f_t$. Note that if 
the forward orbit of $c$ (also called {\it postcritical orbit})
is infinite then $R_t$ is empty.

We say that a piecewise expanding unimodal map $f$ is {\it good}  if either $c$
is not periodic or, writing $p\ge 2$ for the prime period of $c$, if
\begin{equation}\label{good}
|Df^{p-1}(f(c)) |\min \{ |Df^+(c)|,|Df^-(c) |\} > 2.
\end{equation}
A map $f\colon I \to I$ is {\it $\epsilon$-expansive} 
if for every interval $L \subset I$ there is $i\ge 1$ so that 
$$|f^i(L)|> \epsilon.$$
A piecewise expanding unimodal  map 
$f_0$ is {\it stably $\epsilon$-expansive} if every piecewise expanding 
unimodal  map 
$f$ 
close enough to $f_0$ (for $|\cdot|_1$) is $\epsilon$-expansive.
We give the easy proof of the following useful result for completeness:

\begin{pro} \label{deepness}
Let $f$ be a piecewise expanding  unimodal  map. Then  there exists
$\epsilon >0$ so that $f$ is  $\epsilon$-expansive.
If we assume furthermore that $f$ is good, then  there exists
$\epsilon >0$ so that $f$ is  stably $\epsilon$-expansive.
\end{pro}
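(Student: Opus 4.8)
The plan is to prove the two assertions separately, the first being a soft compactness-plus-expansion argument and the second an open-condition argument that uses the hypothesis of goodness in an essential way.

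For the first assertion, fix $f\in\mathcal{U}^1$ and recall $\lambda_f=\min_x|Df(x)|>1$. Given any interval $L\subset I$, I would argue that either $L$ never meets the critical point $c=0$ under iteration, in which case each $f^i$ is a diffeomorphism onto its image on a neighbourhood of $L$ and $|f^i(L)|\ge\lambda_f^i|L|$ grows without bound, forcing $|f^i(L)|$ to exceed any fixed $\epsilon<|I|/2$ for $i$ large; or some iterate $f^i(L)$ contains $c$ in its interior. In the latter case I split $f^i(L)$ at $c$ into two subintervals, one of which has length $\ge|f^i(L)|/2\ge(\lambda_f^i|L|)/2$ if we had expansion up to that time, and more to the point one can then iterate once more and use that $f$ maps a one-sided neighbourhood of $c$ onto a one-sided neighbourhood of $f(c)$ with derivative bounded below by $\lambda_f$; the key observation is that once an iterate of $L$ has definite length we are done, and the only way to avoid definite length forever is to keep straddling $c$, which by a standard argument (the two ``halves'' cannot both stay short because their union has length $\ge\lambda_f^{\,}\cdot(\text{previous length})$) cannot persist. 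Quantitatively one takes $\epsilon$ smaller than, say, half the length of the shorter of the two monotonicity branches' images, and checks that a straddling interval of length $<\epsilon$ has an iterate of length at least $\lambda_f$ times as large; iterating, the length escapes $\epsilon$ in finitely many steps. This yields an $\epsilon=\epsilon(f)>0$ with $f$ $\epsilon$-expansive.

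For the second assertion, assume $f$ is good. If $c$ is not periodic, then for any $N$ the points $c,f(c),\dots,f^N(c)$ are distinct and, being finitely many, stay a definite distance $\eta_N$ apart and a definite distance from $c$; choosing $N$ large enough that the escape time from any fixed small scale is at most $N$ (using the uniform expansion $\lambda:=\inf\lambda_g>1$ over a $|\cdot|_1$-neighbourhood of $f$), one sees the $\epsilon$-expansivity constant can be taken uniform on that neighbourhood, because the finitely many quantities involved vary continuously with $g$. If $c$ is periodic of prime period $p$, the obstruction is that a nearby map $g$ may have $c$ periodic of the same period, and an interval $L$ could in principle be trapped near the (now possibly attracting or neutral) periodic orbit of $c$; this is exactly what \eqref{good} rules out. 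Here I would use that the return map to a neighbourhood of $c$ along the period-$p$ orbit has, in the limit $g=f$, ``derivative'' (in the one-sided sense, from each side) equal to $Df^{p-1}(f(c))\cdot Df^{\pm}(c)$ in absolute value, which by \eqref{good} exceeds $2$; hence for $g$ close to $f$ the corresponding first-return map to a fixed small interval around $c$ expands every subinterval by a factor $>2$ on the part not containing $c$, and the factor-$2$ beats the at-most-halving that can occur when one cuts at $c$. Thus any interval near the orbit of $c$ still grows, and combined with the expansion away from the orbit (where $|Dg|\ge\lambda>1$ uniformly) every interval has an iterate of definite length, uniformly in $g$ near $f$.

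The main obstacle is the periodic case: making precise the claim that the factor $>2$ in \eqref{good} survives taking a first-return map through the full period and is robust under small $C^1$ perturbation, while simultaneously controlling what happens to an interval that sits partly inside and partly outside a small neighbourhood of the periodic orbit. I expect to handle this by fixing once and for all a small interval $W\ni c$ on which the branches of $f$ and their period-$p$ compositions are nicely controlled, choosing the $|\cdot|_1$-neighbourhood of $f$ small enough that the analogous estimates hold for every $g$ in it with uniform constants, and then running the dichotomy ``either $L$ (or an iterate) leaves $W$ and we use $\lambda$-expansion, or it stays in $W$ and we use the $>2$ return expansion,'' noting the second case cannot recur indefinitely. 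The non-periodic case and the first assertion are then comparatively routine, resting only on $\lambda_f>1$ and the finiteness/continuity of the relevant geometric data.
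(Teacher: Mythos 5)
Your first assertion is where the real work lies, and as written it has a gap. When an interval $L$ straddles $c$, write $L^-$ and $L^+$ for the two halves: their images are both intervals with right endpoint $f(c)$, so they overlap (one contains the other) and $|f(L)|=\max(|f(L^-)|,|f(L^+)|)\ge\tfrac{\lambda_f}{2}|L|$, nothing more. Your parenthetical claim that the union of the two image-halves has length at least $\lambda_f$ times the previous length is therefore false, and with it the assertion that straddling ``cannot persist'': if $\lambda_f\le 2$, a cut could a priori recur every few iterates, and the bookkeeping ``factor $\lambda_f$ per step, factor $\tfrac12$ per cut'' then yields no growth at all. Ruling out frequent cuts is exactly what the choice of $\epsilon$ has to accomplish, and your choice (half of the shorter branch image --- note both branch images equal $[-1,f(c)]$, so this $\epsilon$ is not even small) carries no information about returns of the critical orbit. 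What is needed, and what the paper does, is to fix $N_0$ with $\tfrac12\lambda_f^{N_0-1}>\lambda_f$ and then $\epsilon$ so small that $c\notin\intt(f^i([-\epsilon,\epsilon]))$ for $1\le i\le N_0$; this is possible for \emph{every} piecewise expanding unimodal map, including periodic $c$, because in that case $c$ is an endpoint, not an interior point, of $f^{kp}([-\epsilon,\epsilon])$. With such an $\epsilon$, after a cut at $c$ the longer half runs cut-free for $N_0$ iterates, so any $Q\subset[-\epsilon,\epsilon]$ satisfies $|f^{N_0}(Q)|\ge\tfrac12\lambda_f^{N_0}|Q|>\lambda_f|Q|$, and the block decomposition (one step if the current iterate misses $c$, $N_0$ steps if it contains $c$) gives $|Q_s|\ge\lambda_f^s|Q|$, whence $\epsilon$-expansiveness. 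Your step ``checks that a straddling interval of length $<\epsilon$ has an iterate of length at least $\lambda_f$ times as large'' is precisely this estimate, and it cannot be checked without tying $\epsilon$ to the critical orbit in this way.

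The stability part is sketched in the right spirit and is essentially what the paper leaves to the reader: for non-periodic $c$ one can take $\epsilon$ with $f^i([-\epsilon,\epsilon])$ at positive distance from $c$ for $1\le i\le N_0$, and these finitely many conditions, together with a uniform lower bound on $|Dg|$, persist for $g$ near $f$ in $|\cdot|_1$; for periodic $c$ the role of (\ref{good}) is indeed that a cut costs at most a factor $2$ while the first-return derivative along the critical cycle exceeds $2$ with a margin surviving small perturbations, so the analogue of (\ref{grown}) still holds for nearby maps. But this second half rests on the same growth mechanism as the first, so the proposal stands or falls with the missing estimate above: the genuine difficulty is not where you located it (robustness of the factor $>2$), but in choosing $\epsilon$ so as to control how often an orbit of intervals can be cut at $c$.
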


\begin{proof}  
Choose $N_0$ such that
$\frac{1}{2} \lambda_f^{N_0-1} >  \lambda_f$,
and $\epsilon$ such that 
$$c \not \in \intt(f^i[-\epsilon,\epsilon]) \mbox{ for } i=1, \dots, N_0 .
$$
Then for every interval $Q \subset [-\epsilon,\epsilon]$ we have 
\begin{equation}\label{grown} |f^{N_0}(Q)| >  \lambda_f |Q|.\end{equation}
If the turning point is not periodic it is easy  to see that  (\ref{grown}) remains 
true for any small enough perturbation of $f$.    

Consider an interval $Q\subset I$ and suppose that $|f^i(Q)|<\epsilon$
for every $i\in \nat$.
Define  $n_0=0$ and $n_1,n_2,n_3,\dots$ in the following way: If 
$$c\not \in Q_s:=f^{n_0+n_1+\dots+ n_s}(Q)$$
define $n_{s+1}=1$. In this case
$$|Q_{s+1}|=|f(Q_s)|\geq \lambda_f |Q_s|.$$
Otherwise set $n_{s+1}=N_0$. Note that $Q_s \subset  [-\epsilon,\epsilon]$
and that  (\ref{grown}) implies
$$|Q_{s+1}|=|f^{N_0}(Q_s)|\geq \lambda_f |Q_s|,$$
so
$|Q_s|\geq \lambda_f^s|Q|$,
which implies that $|Q|=0$, proving that $f$ is $\epsilon$-expansive.

If $c$ is periodic but (\ref{good}) holds, the argument above can be easily modified 
to show stable
$\epsilon$-expansiveness.
\end{proof}


\section{The linear functional $J(f,\cdot)$}
\subsection{Definition and relation with the twisted cohomological equation}

We shall associate a bounded linear functional  $J(f,\cdot) \in (L^ \infty(I))^*$ 
to each  piecewise expanding unimodal  map $f$.
This functional will play a main role in this work.  
Let 
$v\colon I \to \mathbb{R}$
be a bounded function. If the critical point $c$ is not periodic, we define

 $$J(f,v)= \sum_{i=0}^\infty \frac{v(f^{i}(c))}{Df^{i}(f(c))}.$$
The above expression  is not well defined if  the critical point $c$ is periodic, since the 
derivative at the critical point does not exist. If $c$ has  prime period $p$ 
we  set

 $$J(f,v)= \sum_{i=0}^{p-1} \frac{v(f^{i}(c))}{Df^{i}(f(c))}.$$

Note that in both cases (non periodic and periodic critical points) we have
$$|J(f,v)|\leq\frac{ |v|_{L^\infty}}{1-\lambda_f^ {-1}}.$$
It is easy to see that $v\mapsto J(f,v)$ is not the zero functional on $C(I)$,
so for every $k\in \nat$,  by the density of 
$C^k(I)$ in $C(I)$,  there exists $v \in C^k(I)$ with $J(f,v)\not= 0$.

The meaning of the expression for $J(f,v)$ can be clarified by the following comments. 
Let $f_t$ be a $C^1$ family of  piecewise expanding $C^1$ unimodal maps such that $\partial_t f_t|_{t=0}=v$, $f_0=f$. 
(We shall sometimes call the argument $v$ of $J(f,v)$ a {\it vector field}.)
Then, for any $k\ge 1$,
if $f^j(x)\ne c$  for $1 \le j \le k-1$
 $$\partial_t f^{k}_t(x)|_{t=0} = \sum_{i=0}^{k-1} Df^{k-1-i}(f^{i+1}(x))\cdot v(f^i(x)) , $$
so if $f^j(x)\ne c$  for $1 \le j \le k-1$, then
 \begin{equation}\label{clarif}
  \frac{\partial_t f^{k}_t(x)|_{t=0} }{Df^{k-1}(f(x))}= \sum_{i=0}^{k-1} 
\frac{v(f^{i}(x)))}{Df^{i}(f(x))}  .
\end{equation}
So, if $c$ has prime period $k$, then
\begin{equation}\label{clarif1}
J(f,v)=  \frac{\partial_t f^{k}_t(c)|_{t=0} }{Df^{k-1}(f(c))} ,\quad 
(v=\partial_t f_t|_{t=0}, f=f_0),
\end{equation} 
and if $c$ is  not periodic for $f$, then
 \begin{equation}\label{clarif2}J(f,v)= \lim_{k \to \infty} 
\frac{\partial_t f^{k}_t(c)|_{t=0} }{Df^{k-1}(f(c))}, \quad
(v=\partial_t f_t|_{t=0}, f=f_0).\end{equation}
In other words, 
the  derivatives in the phase and parameter spaces along the critical orbit are related by $J(f,v)$.

We also mention  that if $v=X \circ f$ then
$J(f,v)=s_1^ {-1} \JJ(f,X)$,
where $s_1 < 0$ is the jump $s_1=-\lim_{x\to 1} \rho(x)$ at $1$
of the invariant density $\rho$ of $f$, and where $\JJ(f,X)$ was introduced in
\cite{Ba} and used in \cite{BS}. It was observed in \cite{BS}
(see also Proposition \ref{stce} below) that
elements $v$ of the kernel of $\JJ(f,\cdot)$ satisfy
$\sum_{i=0}^ \infty \frac{v(f^ i(c))}{ Df^ i (f(c))}=0$
if $c$ is not periodic and $\sum_{i=0}^{p-1} \frac{v(f^ i(c))}{ Df^ i (f(c))}=0$
if $c$ has prime period $p$.
Such $v\in \Ker( (\JJ(f,\cdot))$ 
deserve to be called
{\it horizontal vector fields,} by analogy with the theory
for smooth unimodal maps (\cite{L}, \cite{ALM}) and in view of the results
in \cite{BS} (in particular Corollary 2.6 and Remark 2.7 there).
Our Theorems \ref{equiv} and \ref{create} also justify this terminology.

We next recall the relation between $J(f,v)$ and the {\it twisted cohomological equation
(\ref{t})}
from \cite[Lemma 2.2]{BS}.  

\begin{pro} \label{stce} For every piecewise
expanding unimodal map $f$ and  $v\in L^\infty(I)$  the following holds: 
Let $\mathcal{D}$ be  the set of  $x \in I$ with a forward orbit that does not contain $c$. 
There exists a unique  bounded  function 
$\alpha\colon \mathcal{D}\to \mathbb{R}$
such that 
\begin{equation}\label{t} v(x) =\alpha \circ f(x) -Df(x)\cdot \alpha(x),\end{equation}
for every $x \in \mathcal{D}$. There exists a unique bounded function 
$\alpha\colon I \to \mathbb{R}$  such that $\alpha(c)=0$ and (\ref{t}) holds 
for every $x\neq c$.

Furthermore 
$J(f,v)=0 \mbox{ if and only if } v(c)=\alpha(f(c))$.
\end{pro}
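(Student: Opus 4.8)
The plan is to solve the twisted cohomological equation \eqref{t} by iterating it forward along orbits and reading off a telescoping series, then to identify the obstruction to extending the solution across $c$ with the functional $J(f,v)$. First I would fix $x\in\mathcal{D}$ and unwind \eqref{t}: since $f^j(x)\neq c$ for all $j\geq 0$, the derivatives $Df^i(f(x))$ are well defined and nonzero, and rewriting \eqref{t} as $\alpha(x) = Df(x)^{-1}\bigl(\alpha(f(x)) - v(x)\bigr)$ and iterating gives, after dividing through, the candidate
\begin{equation*}
\alpha(x) = -\sum_{i=0}^{\infty} \frac{v(f^i(x))}{Df^{i+1}(x)},
\end{equation*}
where I have used the cocycle identity $Df^{i+1}(x) = Df(f^i(x))\,Df^i(x)$ and the fact that the ``boundary term'' $\alpha(f^{n}(x))/Df^{n}(x)$ tends to $0$ because $|\alpha|$ would be bounded and $|Df^n|\geq \lambda_f^n\to\infty$. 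Convergence and the bound $|\alpha|_{L^\infty}\leq |v|_{L^\infty}/(1-\lambda_f^{-1})$ are immediate from $\lambda_f>1$. One checks directly that this $\alpha$ satisfies \eqref{t} on $\mathcal{D}$, and uniqueness follows because the difference of two bounded solutions $\beta$ satisfies $\beta(x) = Df(x)^{-1}\beta(f(x))$, hence $|\beta(x)|\leq \lambda_f^{-n}|\beta(f^n(x))|\to 0$.

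Next I would treat the extension to all of $I$. The set $I\setminus\mathcal{D}$ consists of preimages of $c$; if $c$ is not periodic these are the points whose orbit eventually hits $c$ and then follows the (infinite, $c$-avoiding after the first step is false — rather, the postcritical orbit, which may itself avoid $c$ or not). The cleanest route: \emph{impose} $\alpha(c)=0$, and then for any $x\neq c$ with $f^m(x)=c$ for some minimal $m\geq 1$, the finite iteration of \eqref{t} forces
\begin{equation*}
\alpha(x) = -\sum_{i=0}^{m-1}\frac{v(f^i(x))}{Df^{i+1}(x)} + \frac{\alpha(c)}{Df^m(x)} = -\sum_{i=0}^{m-1}\frac{v(f^i(x))}{Df^{i+1}(x)},
\end{equation*}
which is a well-defined bounded function (the geometric bound still applies, and at points where $f^j(x)=c$ with $j<m$ this cannot happen by minimality of $m$, while the one-sided derivatives $Df^\pm$ at $c$ appearing for $i$ with $f^i(x)=c$ — impossible for $i<m$ — do not arise). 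For $x\in\mathcal{D}$ the formula from the first paragraph is used. Boundedness and \eqref{t} for all $x\neq c$ are then routine, and uniqueness given $\alpha(c)=0$ follows as before.

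Finally, the equivalence $J(f,v)=0 \iff v(c)=\alpha(f(c))$. Evaluate \eqref{t} formally at $x=c$: it would read $v(c) = \alpha(f(c)) - Df(c)\alpha(c)$, but $Df(c)$ is undefined and $\alpha(c)=0$, so the meaningful statement is the comparison of $v(c)$ with $\alpha(f(c))$. I would compute $\alpha(f(c))$ from the explicit series. If $c$ is not periodic, $f(c)\in\mathcal{D}$ (its orbit is the postcritical orbit shifted once, which misses $c$ precisely when $c$ is not periodic is too strong — rather one uses that $f(c)$'s forward orbit avoids $c$ iff $c$ is not in the orbit of $f(c)$; handle the nonperiodic case where the postcritical orbit is infinite and, by definition of $\mathcal{D}$ and the periodic/nonperiodic dichotomy, $f(c)$ lies in $\mathcal{D}$ when $c\notin\{f^i(c):i\geq1\}$, reducing otherwise to the periodic case). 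Then
\begin{equation*}
\alpha(f(c)) = -\sum_{i=0}^{\infty}\frac{v(f^{i+1}(c))}{Df^{i+1}(f(c))} = -\sum_{i=1}^{\infty}\frac{v(f^{i}(c))}{Df^{i}(f(c))} = v(c) - \sum_{i=0}^{\infty}\frac{v(f^i(c))}{Df^i(f(c))} = v(c) - J(f,v),
\end{equation*}
so $v(c)=\alpha(f(c))$ iff $J(f,v)=0$. If $c$ has prime period $p$, then $f(c)$ has period $p$ as well, $f^p(f(c))=f(c)$, and iterating \eqref{t} once around the cycle gives $v(f(c)) + Df(f(c))\alpha(f(c))$-type relations that telescope over the finite orbit; carrying this out yields $\alpha(f(c)) = -\sum_{i=1}^{p-1} v(f^i(c))/Df^i(f(c)) + (\text{cycle correction})$, and combining with the period-$p$ definition of $J(f,v)$ gives the same conclusion. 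The main obstacle I anticipate is precisely this bookkeeping at and around the critical point: making sure the one-sided derivatives $Df^\pm(c)$ never actually enter the formulas for $\alpha$ (they don't, because $\alpha(c)=0$ absorbs them), and correctly matching the finite telescoped sum in the periodic case with the finite-sum definition of $J(f,v)$, including the $Df^{p-1}(f(c))$ normalization implicit in \eqref{clarif1}.
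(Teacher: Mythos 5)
Your proposal is correct and follows the same route as the paper, whose proof simply cites \cite{BS} (Lemma 2.2) and recalls exactly the two explicit formulas you derive: the infinite geometric series $\alpha(x)=-\sum_{i\ge 0} v(f^i(x))/Df^{i+1}(x)$ on $\mathcal{D}$, and the finite sum truncated at the first hitting time of $c$ once $\alpha(c)=0$ is imposed, with uniqueness and boundedness coming from $\lambda_f>1$ just as you argue. The only loose end is your ``cycle correction'' in the periodic case: it vanishes, because when $c$ has prime period $p\ge 2$ the point $f(c)$ reaches $c$ after exactly $p-1$ steps, so the finite-sum formula applies verbatim and gives $\alpha(f(c))=-\sum_{i=1}^{p-1} v(f^i(c))/Df^{i}(f(c))$ exactly, whence $J(f,v)=v(c)-\alpha(f(c))$ in both cases.
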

\begin{proof}  
We refer to \cite[Lemma 2.2]{BS}. We just recall that
for each $x \in \mathcal{D}$ 
$$\alpha(x)= -\sum_{i=0}^{\infty} \frac{v\circ f^i(x)}{Df^{i+1}(x)}, $$
and for $x \in I \setminus \mathcal{D} $, with $x\not = c$,
setting   $k=\min \{i > 0\  \colon \  f^i(x)=c \}$,
$$ \alpha(x)=- \sum_{i=0}^{k-1}  \frac{v\circ f^i(x)}{Df^{i+1}(x)}.
$$
\end{proof}


\subsection{Continuity of $\Ker (J(f,\cdot))$}

Observe that  $f \mapsto J(f,v)$
is continuous at piecewise expanding unimodal  maps with  non periodic critical point:
 
\begin{pro}\label{kerno} Let $f_0 \in \mathcal{U}^1$ be a piecewise expanding unimodal  map.  
If the critical point of $f_0$ is not periodic
then 
\begin{itemize}
\item[A.] For every $\eta > 0$ there exists a neighborhood  $W$ of $f_0$ in $\mathcal{U}^1$ 
such that  
$|J(f,v)-J(f_0,v)|\leq \eta |v|_1$ for every $v \in \mathcal{B}^1(I)$ and $f\in W$.
\item[B.] For every  $v_0 \in \mathcal{B}^0(I)$ the  function
$f\mapsto J(f,v_0)$
is continuous at $f=f_0$, considering the $\mathcal{U}^1$ norm. 
\end{itemize}

\end{pro}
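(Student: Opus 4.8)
The plan is to reduce both statements to the observation that $J(f,\cdot)$ is, up to a geometrically small tail, a \emph{finite} sum whose terms depend continuously on $f$ near $f_0$. Fix $\lambda\in(1,\lambda_{f_0})$. Since $|f-f_0|_1$ small forces $|Df-Df_0|_{L^\infty}$ small, there is a neighborhood $W_0$ of $f_0$ in $\UU^1$ on which $\lambda_f\ge\lambda$, so that $|Df^i(f(c))|^{-1}\le\lambda^{-i}$ whenever the left-hand side is defined. For $N\ge1$ put $J_N(f,v)=\sum_{i=0}^{N-1}v(f^i(c))/Df^i(f(c))$; the first step is the tail bound
$$|J(f,v)-J_N(f,v)|\le\frac{\lambda^{-N}}{1-\lambda^{-1}}\,|v|_{L^\infty},$$
valid in both cases of the definition of $J$: for $c$ non-periodic it is the tail of a geometric series, and for $c$ periodic of period $p$ it is the same bound as soon as $p\ge N$ (the sum from $N$ to $p-1$ being then empty or geometrically small).

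The one delicate point, and the main obstacle, is that although $f_0$ has non-periodic critical point, a nearby $f$ may have a periodic critical point — in which case $J(f,\cdot)$ is only a finite sum and, worse, $J_N(f,\cdot)$ could a priori involve a derivative of $f$ at $c$, which is undefined. This is circumvented using the continuity of $f\mapsto f^j(c)$ on $\UU^1$ (for $|\cdot|_1$, indeed already for $|\cdot|_0$) together with $f_0^j(c)\ne c$ for all $j\ge1$: for each $N$ there is a neighborhood $W_N\subset W_0$ of $f_0$ with $f^j(c)\ne c$ for all $f\in W_N$ and $1\le j\le N-1$. On $W_N$ the truncation $J_N(f,\cdot)$ is well defined, and if $c$ is periodic for some $f\in W_N$ then its period is $\ge N$, so the tail bound above applies to that $f$ too.

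Next I would show that $J_N(\cdot,v)$ is continuous at $f_0$ for every fixed $v\in\BB^0(I)$. For $j\ge1$ the point $f_0^j(c)$ lies in the interior of one of the monotonicity intervals $[-1,0]$, $[0,1]$, where $Df_0$ is continuous; combining this with continuity of $f\mapsto f^j(c)$ and with $|Df-Df_0|_{L^\infty}\le|f-f_0|_1$ shows $f\mapsto Df(f^j(c))$ is continuous at $f_0$, hence so are $f\mapsto Df^i(f(c))=\prod_{j=1}^iDf(f^j(c))$ and (since this product is $\ge1$ in modulus) $f\mapsto1/Df^i(f(c))$, for $0\le i\le N-1$. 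As $v$ is continuous, $f\mapsto v(f^i(c))$ is continuous, so $J_N(\cdot,v)$ is continuous at $f_0$. Statement~B then follows by an $\varepsilon/2$ argument: given $\tau>0$, pick $N$ with $2\lambda^{-N}(1-\lambda^{-1})^{-1}|v_0|_{L^\infty}\le\tau/2$, shrink $W_N$ so that $|J_N(f,v_0)-J_N(f_0,v_0)|\le\tau/2$ there, and apply the tail bound at $f$ and at $f_0$.

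For Statement~A I need the $J_N$-difference controlled by $|v|_1$ times a quantity independent of $v$. Writing each term of $J_N(f,v)-J_N(f_0,v)$ as
$$\frac{v(f^i(c))-v(f_0^i(c))}{Df^i(f(c))}+v(f_0^i(c))\Bigl(\frac{1}{Df^i(f(c))}-\frac{1}{Df_0^i(f_0(c))}\Bigr),$$
and using that a continuous piecewise-$C^1$ function is $|v|_1$-Lipschitz (whence $|v(f^i(c))-v(f_0^i(c))|\le|v|_1\,|f^i(c)-f_0^i(c)|$) together with $|v(f_0^i(c))|\le|v|_1$, I obtain $|J_N(f,v)-J_N(f_0,v)|\le|v|_1\,\varepsilon_N(f)$ with $\varepsilon_N(f)=\sum_{i=0}^{N-1}\bigl(\lambda^{-i}|f^i(c)-f_0^i(c)|+|Df^i(f(c))^{-1}-Df_0^i(f_0(c))^{-1}|\bigr)$, and $\varepsilon_N(f)\to0$ as $f\to f_0$ by the continuity facts above. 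Given $\eta>0$, choosing $N$ with $2\lambda^{-N}(1-\lambda^{-1})^{-1}\le\eta/2$ makes the two tails contribute at most $\tfrac\eta2|v|_{L^\infty}\le\tfrac\eta2|v|_1$ uniformly in $v$, and then a neighborhood $W\subset W_N$ with $\varepsilon_N\le\eta/2$ on $W$ yields $|J(f,v)-J(f_0,v)|\le\eta|v|_1$ for all $v\in\BB^1(I)$, $f\in W$, as required.
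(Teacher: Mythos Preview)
Your argument is correct and follows essentially the same route as the paper: truncate the series at level $N$, control the tail geometrically via a uniform lower bound $\lambda$ on $|Df|$ near $f_0$, use non-periodicity of $c$ for $f_0$ to guarantee that the first $N$ iterates avoid $c$ for all nearby $f$, and then split each term of the finite sum exactly as you do (Lipschitz bound on $v$ for Claim~A, continuity of $v_0$ for Claim~B). Your treatment is in fact slightly more explicit than the paper's about why the tail bound remains valid when a nearby $f$ happens to have a periodic critical point.
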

\begin{proof}[Proof of Claim A]
Taking $W$ small enough, 
we have
$\theta=\inf_{f \in W}  \inf_{x} |Df(x)| >1$. 
Let $N $ be such that 
$\frac{\theta^N}{1-\theta} < \frac{\eta}{8}$.
Reducing  $W$, if necessary, we can assume that $f^i(f(c))\neq c$ for every $f \in W$ and  $i \leq N$, 
and that
\begin{equation}\label{gg}
\sum_{i\leq N} \Big| \frac{1}{Df^i(f(c))} -\frac{1}{Df_0^i(f_0(c))}\Big|< \frac{\eta}{4}
\mbox{ and } \sum_{i\leq N} |f^i(c) -f_0^i(c)|\leq \frac{\eta}{4}.
\end{equation}
Then
$$
|J(f,v)-J(f_0,v)|\le
 \sum_{i< N} \Big| \frac{v(f^{i+1}(c))}{Df^{i+1}(f(c))}-
\frac{v(f^{i+1}_0(c))}{Df^{i+1}_0(f_0(c))}\Big| + \frac{\eta}{4} |v|_0.
$$
Estimating $\sum_{i< N} \Big| \frac{v(f^{i+1}(c))}{Df^{i+1}(f(c))}-
\frac{v(f^{i+1}_0(c))}{Df^{i+1}_0(f_0(c))}\Big|$ by
$$
 \sum_{i< N}  |v(f^{i+1}(c))| \ 
\Big|\frac{1}{Df^{i+1}(f(c))}-\frac{1}{Df^{i+1}_0(f_0(c))}\Big| +  \sum_{i< N} \frac{|v(f^{i+1}(c))-v(f^{i+1}_0(c))|}{|Df_0^{i+1}(f_0(c))|},
$$
we get the claim from (\ref{gg}) and our choice of $N$.
\end{proof}

 \begin{proof}[Proof of Claim B]
  We can assume that $|v_0|\leq 1$. Fix $\eta$, and let  $W$ and
$N$ be like in the proof of  Claim A. Reducing $W$ if necessary, we have
$$\sum_{i\leq N} |v_0(f^i(c)) -v_0(f_0^i(c))|\leq \frac{\eta}{4}
\,, \forall f \in W .$$
The calculations in the proof of Claim A imply that $|J(f,v)-J(f_0,v)|< \eta$.
\end{proof}

Continuity  of $f \mapsto J(f,v)$ fails at  maps $f_0$ with periodic critical points. 
However, to prove Theorem~\ref{create}, the next result 
(which, losely speaking, implies that
when $f_t \to f_0$ then $J(f_t,v)\to 0$ if and only if $J(f_0,v)=0$) will suffice:

\begin{pro} \label{kerperiodic}Let $f_0$ be a good piecewise expanding $C^1$ unimodal  map with 
periodic critical point of prime period $p_0$.  There exist $C_+,C_- > 0$, 
such that:
\begin{itemize}
\item[A.]   For every $\eta>0$, there exists a neighborhood $W$ of $f_0$ in $\mathcal{U}^1$ such that, setting
\begin{equation}\label{MM}
\mathcal{M}=\{ f \in W\colon \ f^{p_0}(c)=c, \ f^i(c)\not=c \ for \ i< p_0  \},
\end{equation}
the set $W\setminus \mathcal{M}$ has two connected components, $W_+$ and $W_-$, 
so that, for any $v\in \mathcal{B}^1(I)$, if $f \in \mathcal{M}$ then  $|J(f,v)-J(f_0,v)|\leq \eta |v|_1$,
if $f \in W_+$ then $| J(f,v)-C_+J(f_0,v)|\leq \eta |v|_1$,
  if $f \in W_-$ then $| J(f,v)-C_-J(f_0,v)|\leq \eta |v|_1$.

\item[B.] For every $v \in \mathcal{B}^0(I)$ and $\eta > 0$, there exists a neighborhood $W$ of $f_0$ in $\mathcal{U}^1$ such that $W\setminus \mathcal{M}$
(with $\mathcal{M}$ defined by (\ref{MM})) has two connected components, $W_+$ and $W_-$, so that if $f \in \mathcal{M}$ then  $|J(f,v)-J(f_0,v)|\leq \eta $,
if $f \in W_+$ then $| J(f,v)-C_+J(f_0,v)|\leq \eta $,
 if $f \in W_-$ then $| J(f,v)-C_-J(f_0,v)|\leq \eta $.
\end{itemize}
\end{pro}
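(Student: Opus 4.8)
The plan is to analyse directly the series defining $J(f,v)$, exploiting that for $f$ near $f_0$ the postcritical orbit of $f$ shadows the periodic orbit of $c$ under $f_0$ for an (ever longer) time before escaping. Write $\xi_i=f^i(c)$, $\xi^0_i=f_0^i(c)$, $\varepsilon_f:=f^{p_0}(c)$ (so $\varepsilon_{f_0}=c=0$), and set
\[
D_0:=Df_0^{\,p_0-1}(f_0(c))=\prod_{k=1}^{p_0-1}Df_0(\xi^0_k),\qquad D_{0,\pm}:=D_0\cdot Df_0^{\pm}(c).
\]
Condition \eqref{good} is exactly $\min(|D_{0,+}|,|D_{0,-}|)>2$; fix $\kappa>0$ with this minimum $\ge 2+3\kappa$. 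Since $f_0$ has \emph{prime} period $p_0$, one has $f_0^j(c)\ne c$ for $1\le j\le p_0-1$, and the same for $f$ in a small enough neighbourhood, so $\mathcal M=\{f\in W:f^{p_0}(c)=c\}$; moreover $f\mapsto f^{p_0}(c)$ is $C^1$ near $f_0$ (the orbit point $f^{p_0-1}(c)$ stays away from the kink $c$, as $f_0^{p_0-1}(c)\ne c$), and by \eqref{clarif1} together with $J(f_0,\cdot)\not\equiv 0$ its differential at $f_0$ is nonzero on the affine subspace; so on a small ball $W$ the level set $\mathcal M$ is a topological hypersurface through $f_0$ and $W\setminus\mathcal M$ is the disjoint union of the connected open sets $W_\pm=\{f\in W:\pm(f^{p_0}(c)-c)>0\}$; on $W_+\cup W_-$ we have $\varepsilon_f\ne 0$.

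Next I would regroup the (absolutely convergent, or, when $c$ is periodic for $f$, finite) series for $J(f,v)$ into blocks of length $p_0$:
\[
J(f,v)=\sum_{m\ge 0}T_m+(\mathrm{tail}),\qquad T_m=\frac{U_m}{Q_m},\quad Q_m=\prod_{j=1}^{m}Df^{p_0}\!\big(\xi_{(j-1)p_0+1}\big),\quad U_m=\sum_{l=0}^{p_0-1}\frac{v(\xi_{mp_0+l})}{Df^{l}(\xi_{mp_0+1})},
\]
with $Q_0=1$, $U_0=T_0=S(f,v):=\sum_{l=0}^{p_0-1}v(f^l(c))/Df^l(f(c))$, the ``tail'' (from $c$ being periodic for $f$, or from $m\to\infty$) being $O(|v|_0)$ times a quantity $\to0$ as $f\to f_0$. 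The shadowing mechanism: from the mean value theorem $\xi_{(j+1)p_0}=\varepsilon_f+(f^{p_0})'(\theta_j)\,\xi_{jp_0}$, with $\theta_j$ on the side of $c$ containing $\xi_{jp_0}$ and $(f^{p_0})'(\theta_j)$ close to $D_{0,\mathrm{sgn}\,\xi_{jp_0}}$, together with $|D_{0,\pm}|>2$, one checks that $|\xi_{jp_0}|$ grows by a factor $\ge 1+2\kappa$ at each step, hence escapes a fixed small interval $[-\delta_1,\delta_1]$ at some time $n(f)p_0$ with $n(f)\to\infty$ as $f\to f_0$; that $\xi_i\ne c$ for $0<i\le n(f)p_0$ (so the blocks $T_0,\dots,T_{n(f)-1}$ precede any return of $c$ to itself); and --- the combinatorial core --- that for $f\in W_\pm$ the signs $s_j:=\mathrm{sgn}\,\xi_{jp_0}$ form a sequence $s_\bullet^{\pm}$ depending only on $f_0$ and the sign $\pm$, namely $s_1^{\pm}=\pm$, $s_2^{\pm}=s_1^{\pm}\,\mathrm{sgn}(1+D_{0,s_1^{\pm}})$, $s_{j+1}^{\pm}=s_j^{\pm}\,\mathrm{sgn}(D_{0,s_j^{\pm}})$ for $j\ge 2$. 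For $1\le m\le n(f)$ one has, with $\mathcal D_s(f):=Df^{p_0-1}(f(c))\,Df^{s}(c)$, the estimates $U_m=S(f,v)+O\big(|\xi_{mp_0}|\,|v|_1\big)$ and $Df^{p_0}(\xi_{(j-1)p_0+1})=\mathcal D_{s_j}(f)\big(1+O(|\xi_{(j-1)p_0}|)\big)$ --- where, crucially, the error terms involve only the distances $|\xi_{jp_0}|$, not $|f-f_0|_1$.

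Summing the block estimates then gives, for $f\in W_\pm$,
\[
\big|\,J(f,v)-S(f,v)\,\mathcal C_\pm(f)\,\big|\le \rho(f)\,|v|_1,\qquad \mathcal C_\pm(f):=\sum_{m\ge 0}\Big(\prod_{j=1}^m\mathcal D_{s_j^{\pm}}(f)\Big)^{-1},
\]
with $\rho(f)\to 0$ as $f\to f_0$ (the series converges since $|\mathcal D_s(f)|>2$ for $f$ near $f_0$; empty product $=1$). Since $S(f,v)\to J(f_0,v)$ (continuity of a finite sum, as in the proof of Proposition~\ref{kerno}), $\mathcal D_s(f)\to D_{0,s}$ and hence $\mathcal C_\pm(f)\to C_\pm:=\sum_{m\ge0}\big(\prod_{j=1}^mD_{0,s_j^{\pm}}\big)^{-1}$, this yields $J(f,v)\to C_\pm J(f_0,v)$ uniformly for $|v|_1\le1$, which is Claim~A on $W_\pm$; and $|D_{0,s}|>2$ gives $|C_\pm-1|<\sum_{m\ge1}2^{-m}=1$, hence $C_\pm>0$. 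For $f\in\mathcal M$, $c$ has prime period $p_0$, so $J(f,v)=T_0=S(f,v)=J(f_0,v)+O(|f-f_0|_1|v|_1)$, which settles that case. Claim~B follows by the same argument with $v\in\mathcal B^0(I)$ fixed, the shadowing errors being now measured by the modulus of continuity $\omega_v$ in place of $|v|_1$; one again gets an error tending to $0$ (splitting the block sum at $m\approx n(f)/2$).

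The delicate point --- and the main obstacle --- is $\rho(f)\to 0$: since $n(f)\to\infty$, one must estimate an \emph{unbounded} number of blocks. Two features make this work. First, each block is compared to the ``$f$-data'' $S(f,v),\mathcal D_s(f)$ rather than to the ``$f_0$-data'': this keeps the per-block errors free of cumulative factors $|f-f_0|_1$, the passage to $f_0$ being carried out only at the end, on the finitely controlled objects $S(f,v),\mathcal D_s(f),\mathcal C_\pm(f)$. Second, the error of the $m$-th block is governed by $\max_{j<m}|\xi_{jp_0}|=|\xi_{(m-1)p_0}|$, which by the geometric growth of the $|\xi_{jp_0}|$ is exponentially small in $n(f)-m$; combined with the weights $|Q_m|^{-1}\le(2+2\kappa)^{-m}$ (valid for $m\le n(f)$), the sum over $m\le n(f)$ is $O\big((1+2\kappa)^{-n(f)}\big)$. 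It is precisely this geometric growth of the returns $\xi_{jp_0}$ --- which survives the loss incurred by the inhomogeneous term $\varepsilon_f$ in $\xi_{(j+1)p_0}=\varepsilon_f+(f^{p_0})'(\theta_j)\xi_{jp_0}$ --- that forces the strict inequality ``$>2$'' in \eqref{good}; and the same inequality is what locks the sign pattern $s_\bullet^{\pm}$, so that the limit is a single constant on each side rather than an $f$-dependent one.
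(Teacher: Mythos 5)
Your argument is correct in substance and shares the paper's architecture: regroup the series for $J(f,v)$ into blocks of length $p_0$, observe that the symbol seen at the returns $f^{jp_0}(c)$ near $c$ is locked by the side of $\mathcal{M}$ on which $f$ lies, and identify $C_\pm$ with the resulting series $\sum_{m\ge 0}\prod_{j=1}^{m}\bigl(Df_0^{p_0-1}(f_0(c))\,Df_{0,s_j^\pm}(c)\bigr)^{-1}$, condition \eqref{good} giving both the positivity of $C_\pm$ and the geometric weights. Where you genuinely diverge is in how the locked sign pattern and the long shadowing time are obtained: the paper derives them from stable $\epsilon$-expansiveness (Proposition \ref{deepness}) via an itinerary argument (the nested intervals $I_i$ and the infimum $y$), whereas you use the affine recursion $\xi_{(j+1)p_0}=\varepsilon_f+(f^{p_0})'(\theta_j)\,\xi_{jp_0}$, whose expansion factor exceeds $2$ by \eqref{good}, to get geometric growth of $|\xi_{jp_0}|$ together with an explicit sign recursion; this is more quantitative and avoids invoking expansivity altogether. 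You also organize the error budget differently: the paper fixes a truncation level $m$ first, compares the first $m$ blocks directly to the $f_0$-data on a further neighborhood $\widetilde{W}_{\delta,p_0m}$ (cf.\ \eqref{rt}, \eqref{lip}) and controls everything beyond by the weights, while you compare all blocks up to the escape time to the $f$-data $S(f,v)$, $\mathcal{D}_s(f)$ and pass to $f_0$ only at the end, on finitely controlled objects. Both routes are sound.

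One caveat: your per-block error $O(|\xi_{(m-1)p_0}|\,|v|_1)$, and a fortiori the claimed decay ``exponentially small in $n(f)-m$'' and the rate $\rho(f)=O\bigl((1+2\kappa)^{-n(f)}\bigr)$, implicitly use Lipschitz continuity of $Df$; in $\mathcal{U}^1$ the derivative is only continuous, so the comparison of $Df^{p_0}(\xi_{(j-1)p_0+1})$ with $\mathcal{D}_{s_j}(f)$ (and of $U_m$ with $S(f,v)$ through the denominators) is governed by the modulus of continuity of $Df_0$ plus $|f-f_0|_1$, not linearly by $|\xi_{jp_0}|$ alone. This does not endanger the proposition: since the weights $|Q_m|^{-1}$ are geometrically summable by \eqref{good}, it suffices to split the block sum at a fixed $M$ and let first $f\to f_0$, then $M\to\infty$ --- the device you already use for Claim B --- so only the stated rate, not the conclusion $\rho(f)\to 0$, has to be dropped.
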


A  consequence   of 
Propositions ~\ref{kerno} and ~\ref{kerperiodic} (B.) is  that 
$ \Ker ( J(f,\cdot))$ is a continuous distribution of codimension-one subspaces
for {\it any} good $f$:

\begin{cor}[Continuity of $\Ker (J(f,\cdot))$] \label{continuity} Let $f$ be a
good piecewise  expanding $C^1$ unimodal  map. Suppose that  $f_n$ is a sequence of piecewise 
expanding $C^1$ unimodal  maps with
$|f_n-f|_1\to 0$,
and that  $v_n\in \mathcal{B}^0(I)$ and $v\in\mathcal{B}^0(I)$ are  such that 
$$|v_n - v|_0\to 0 \mbox{ and }
J(f_n,v_n)=0 \, , \forall n
$$ 
then   $J(f,v)=0$.
\end{cor}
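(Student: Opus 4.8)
The plan is to split into two cases according to whether the critical point of $f$ is periodic or not, and in each case simply pass to the limit in the equation $J(f_n,v_n)=0$ using the appropriate continuity statement that has already been established.

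First suppose the critical point of $f$ is not periodic. Then by Proposition~\ref{kerno}(A), for any $\eta>0$ there is a neighborhood $W$ of $f$ with $|J(g,w)-J(f,w)|\le \eta|w|_1$ for all $g\in W$ and $w\in\mathcal{B}^1(I)$; since eventually $f_n\in W$, we would like to write $J(f,v)$ as a limit of $J(f_n,v_n)=0$. The subtlety is that $v_n,v$ are only in $\mathcal{B}^0(I)$, not $\mathcal{B}^1(I)$, so I must combine Claim A (uniform continuity in $f$, good for the difference $J(f_n,v)-J(f,v)$) with a separate estimate controlling $J(f_n,v_n)-J(f_n,v)$ by $|v_n-v|_0$. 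For the latter I use the elementary bound $|J(g,w)|\le |w|_0/(1-\lambda_g^{-1})$ from Section~3, with $\lambda_{f_n}$ bounded below uniformly for $n$ large (since $f_n\to f$ in $|\cdot|_1$). Thus $|J(f_n,v_n)-J(f_n,v)|\le |v_n-v|_0/(1-\lambda_{f_n}^{-1})\to 0$, and $|J(f_n,v)-J(f,v)|\to 0$ by Claim~B of Proposition~\ref{kerno}. Hence $J(f,v)=\lim_n J(f_n,v_n)=0$.

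Now suppose the critical point of $f$ is periodic of prime period $p_0$; this is where the good hypothesis is essential. Here $f\mapsto J(f,\cdot)$ is genuinely discontinuous, so I invoke Proposition~\ref{kerperiodic}(B) instead. Given $\eta>0$, it furnishes constants $C_\pm>0$ and a neighborhood $W$, partitioned by the codimension-one set $\mathcal{M}$ into $\mathcal{M}$, $W_+$, $W_-$, on each piece of which $J(f_n,v)$ is within $\eta$ of $J(f,v)$, $C_+J(f,v)$, or $C_-J(f,v)$ respectively. For $n$ large, $f_n\in W$, so $f_n$ lies in one of these three sets; combining with the same $|v_n-v|_0$ estimate as above, $J(f_n,v_n)=0$ forces one of $|J(f,v)|$, $C_+|J(f,v)|$, $C_-|J(f,v)|$ to be at most $\eta + o(1)$. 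Since $C_\pm$ are fixed positive constants independent of $\eta$ and $n$, letting $\eta\to 0$ (and passing to a subsequence so that $f_n$ stays in a single one of the three pieces, which is possible since there are only finitely many pieces) gives $J(f,v)=0$.

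The main obstacle — really the only point requiring care — is the mismatch of regularity: the hypotheses give $v_n,v\in\mathcal{B}^0(I)$ only, while the sharp quantitative continuity statements in Propositions~\ref{kerno} and~\ref{kerperiodic} are phrased partly for $\mathcal{B}^1$ vector fields. The resolution is to never differentiate the vector field: split $J(f_n,v_n)-J(f,v)$ (or its analogue with $C_\pm$) as $[J(f_n,v_n)-J(f_n,v)]+[J(f_n,v)-(\,\cdot\,)]$, bound the first bracket by the trivial $L^\infty$ estimate with the uniform lower bound on $\lambda_{f_n}$, and bound the second by the $\mathcal{B}^0$ version, namely Claim~B of each proposition. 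Everything else is a routine limiting argument.
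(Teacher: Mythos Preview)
Your proof is correct and is exactly the argument the paper has in mind: the paper does not spell out a proof of this corollary, merely stating that it is ``a consequence of Propositions~\ref{kerno} and~\ref{kerperiodic} (B.)'', and your two-case splitting with the decomposition $J(f_n,v_n)-(\text{target})=[J(f_n,v_n-v)]+[J(f_n,v)-(\text{target})]$ is the natural way to unpack that. One small simplification: in the periodic case you do not need to pass to a subsequence lying in a single piece, since $\min\{1,C_+,C_-\}>0$ gives a uniform lower bound on the coefficient of $|J(f,v)|$ regardless of which piece $f_n$ falls into.
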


 \begin{proof} [Proof of Proposition~\ref{kerperiodic}]
Since $$J(f_0,\cdot): \{u \in \BB^1(I), u(-1)=u(1)=0\} \to \real$$ is a non trivial linear functional, there is  $w\in \BB^ 1(I)$,  with $w(-1)=w(1)=0$, so that 
$J(f_0,w) >0$. 
Define the subspace
$$K=\{ u \in \mathcal{B}^1(I), \ u(-1)=u(1)=0, \ J(f_0,u)=0\}.$$
We can identify a neighborhood of $f_0$ in $\mathcal{U}^ 1$ 
with  a neighborhood $\widetilde W$ of $(0,0)$ in  $K \times \mathbb{R}$
via $(u,a)\to f_0+u + aw$.
Consider the functional
 $F\colon \widetilde W \to \mathbb{R}$
 defined by 
 $$F(u,a)=(f_0+u+aw)^{p_0}(c)-c,$$
where $p_0$ denotes the prime period of $c$ for $f_0$. 
 
Note that $F$ is a $C^1$-function  and (recall (\ref{clarif}))
\begin{equation}\label{Imp1}
\partial_a F|_{(u,a)=(0,0)} =  Df_0^{p_0-1}(f_0(c))J(f_0,w) \not = 0.
\end{equation}
 
So by the Implicit Function Theorem,  there exists a neighborhood 
$W \subset \mathcal{U}^1$ of $f_0$ in which 
$\mathcal{M}\subset W$, defined by (\ref{MM}), is  a Banach submanifold 
and  so that $W\setminus \mathcal{M}$ has two connected components. 

Let $W_+$ be the connected component containing the maps $f$ satisfying 
$$f^{p_0}(c) > c,$$
and let $W_-$ be  the other component. 
 
We claim  that our assumption that
$f_0$ is stably $\epsilon$-expansive implies that
for every $n$ there exist  neighborhoods $W_n \subset W_{n-1}$ of $f_0$ 
with the following properties:
the sets $W_n\cap W_{+}$ and  $W_n\cap W_{+}$ are  connected and  the critical 
point $c$ of every map
$$f \in W_n\setminus\mathcal{M}$$
is either non periodic or periodic with prime period  $\ge n$. Indeed, 
consider open intervals $I_0$, $I_1, \dots , I_{p_0-1}$, with pairwise disjoint closures, 
such that 
$$|I_i| < \epsilon \mbox{ and } f_0^i(c) \in I_i \mbox{ for every } 0\le i < p_0.$$
Let $W_n$ be a small enough neighbourhood of
$f_0$ so that  every $f \in W_n$ satisfies
\begin{equation}\label{nest}  f^i(c) \subset I_{i \ mod \ p_0},\end{equation}
for every $0\le i\leq n$. In particular, if $f$ has a critical point with prime period 
$p < n$ 
then (\ref{nest}) holds for {\it every} $i$. We claim that $f^{p_0}(c)=c$. 
It is enough to show that 
$$\ell= \# \{f^i(c) \ \colon \ f^i(c) \in I_1     \}=1.$$ 
Define
$$y =\inf \{f^i(c) \ \colon \ f^i(c) \in I_1     \}.$$
Then 
$$f^i[y,f(c)] \subset I_{i+1 \ mod \ p_0} \mbox{ for every } i.$$ 
Since $f$ is $\epsilon$-expansive, this implies that $y=f(c)$, so that
$\ell=1$, as desired.

Consequently, the itinerary of the critical point up to the $n$-th iteration 
is the same for all maps in $W_n\cap W_+$. The  same statement holds for $W_n\cap W_-$. 
In particular there exist sequences
$$\sigma_+ =(\sigma_0^+=C,\sigma_1^+,\sigma_2^+,\dots) \mbox{ and }
\sigma_- =(\sigma_0^-=C,\sigma_1^-,\sigma_2^-,\dots), 
\quad \sigma^+_i,\sigma^-_i \in \{R,L\}, $$
such that the itinerary of the critical point of a 
map  $f \in W_+$ converges to $\sigma_+$ (in the product topology of $\{C,R,L\}^\mathbb{N}$) 
when the map converges to $f_0$, and an analogous statement holds for $\sigma_-$ and $W_-$. 
It is not difficult to see that if $\sigma=(C,\sigma_1,\sigma_2,\dots)$
is the itinerary of the critical point of $f_0$, then $\sigma_i^{+}=\sigma_i^{-}=\sigma_i$ 
if $p_0\ne i$.  

Define
$$ 
C_+:=\sum_{i=0}^{\infty} \frac{1}{[Df_0^{p_0-1}(f_0(c))]^{i}}
\prod_{j=0}^{i}\frac{1}{Df_{0, \sigma_{jp_0}^+}(c)},$$
where we put $Df_{0,R}(c)=\lim_{x\to c, x > c} Df_0(x)$, $Df_{0,L}(c)=\lim_{x\to c, x < c}Df_0(x)$, 
and $Df_{0,C}(c)=1$.  Since $f_0$ is
good there is $\beta >1$ so that
$$|Df_{0,s}(c)\ Df_{0}^{p_0-1}(f_0(c))| > 2 \beta$$
for  $s \in \{L, R\}$, so 
$$ \frac{1}{2\beta} \frac{2\beta-2}{2\beta-1}  \leq C_+  \leq \frac{2\beta}{2\beta-1}.$$

Set
$\lambda:= \inf_{f \in W} \inf_x |Df(x)| > 1$.
For each $f \in W_{p_0m}\setminus \mathcal{M}$ we have
$$
\Big| \prod_{j=0}^{i-1} \frac{1}{[Df^{p_0-1}(f^{jp_0+1}(c))]}\frac{1}{Df(f^{(j+1)p_0}(c))}- 
\frac{1}{[Df_0^{p_0-1}(f_0(c))]^{i}} \prod_{j=0}^{i}\frac{1}{Df_{0, \sigma_{jp_0}^+}(c)}\Big|\\
$$
\begin{equation}\label{rt}
\leq 2 \lambda^{-p_0m}, \quad
\forall 1 \le i \le m ,
\end{equation}
and 
$$\biggl | J(f,v)- \sum_{i=0}^{m} 
\Big[  \prod_{j=0}^{i-1} \frac{1}{[Df^{p_0-1}(f^{jp_0+1}(c))]}
\frac{1}{Df(f^{(j+1)p_0}(c))}\Big] \sum_{\ell=0}^{p_0-1} 
\frac{v(f^{p_0i+\ell}(c)}{Df^{\ell}(f^{p_0i+1}(c))} 
\biggr |$$
$$
\leq \frac{\lambda^{-p_0m} |v|_0}{1-\lambda^{-1}}.
$$
Also, we have
$$|C_+ J(f_0,v)-  \sum_{i=0}^{m} \frac{1}{[Df_0^{p_0-1}(f_0(c))]^{i}}
\prod_{j=0}^{i}\frac{1}{Df_{0, \sigma_{jp_0}^+}(c)} J(f_0,v)|\leq \frac{\lambda^{-p_0m} |v|_0}{1-\lambda^{-1}}.$$

Fix $\delta > 0$ and let $m\ge 1$ be such $2m\lambda^{-p_0m} < \delta$. 
If we assume, as in Claim A, that  $v \in \mathcal{B}^1(I)$,  it is not difficult to see that there is a neighborhood $\widetilde{W}_{\delta,p_0m}$ of $f_0$ such that 
if $f \in \widetilde{W}_{\delta,p_0m}\cap W_+\cap W_{p_0m} $ then
\begin{equation} \label{lip}  | \sum_{\ell=0}^{p_0-1}  \frac{v(f^{p_0i+\ell}(c))}{Df^{\ell}(f^{p_0i+1}(c))} -J(f_0,v)|
\leq \delta |v|_1, \mbox{ for every } 0\le i\leq m .
\end{equation}
Consequently if $f \in \widetilde{W}_{\delta,p_0m}\cap W_+\cap W_{p_0m}$ then
$$|J(f,v)-C_+ J(f_0,v)|\leq \frac{3\delta}{1-\lambda^{-1}}|v|_0 
+ \frac{\delta}{1-\lambda^{-p_0}}|v|_1 .$$
This proves Claim A for $W_+$.

To show Claim B for $W_+$, consider, without loss of generality,  $v \in \mathcal{B}^0(I)$ with $|v|_0\leq 1$. Then we can find $\widetilde{W}_{\delta,p_0m}$ such that  
$$| \sum_{\ell=0}^{p_0-1}  \frac{v(f^{p_0i+\ell}(c))}{Df^{\ell}(f^{p_0i+1}(c))} -J(f_0,v)|
\leq \delta ,\quad \forall 0\le i\leq m,
$$
holds   for every $f \in \widetilde{W}_{\delta,p_0m}\cap W_+\cap W_{p_0m}$. Then
$$|J(f,v)-C_+ J(f_0,v)|\leq \frac{3\delta}{1-\lambda^{-1}}+ \frac{\delta}{1-\lambda^{-p_0}},$$
completing the proof of Claim B for $W_+$.

We can apply a  similar argument to $f \in W_{p_0m}\cap W_-$ and
$$C_-:=\sum_{i=0}^{\infty} \frac{1}{[Df_0^{p_0-1}(f_0(c))]^{i}}
\prod_{j=0}^{i}\frac{1}{Df_{0, \sigma_{jp_0}^-}(c)},
$$ 
with
$ \frac{1}{2\beta} \frac{2\beta-2}{2\beta-1}  \leq C_-  \leq \frac{2\beta}{2\beta-1}$.

The proof of the claims for $f \in \mathcal{M}$ is easier.
\end{proof}

\section{Bifurcations in families of expanding  unimodal  maps}

We are going to see in this section that if a $C^1$ family $f_t$  of good piecewise expanding $C^1$
unimodal  maps is tangent to the distribution of codimension-one subspaces 
$$f  \mapsto \Ker (J(f, \cdot))$$
then there are no bifurcations in this family, that is,  there are homeomorphisms  $h_t$ such
that $h_t\circ f_0= f_t\circ h_t$
for every $t$. In other words, the family is a smooth deformation of $f_0$.
The reverse statement also holds: If $f_t$ is a family such that
$J(f_0,\partial_t f_t|_{t=0})\not = 0$ and if $f_0$ is good,
then there are bifurcations in this family. 


\begin{thm}[Characterization of smooth deformation] \label{equiv}
Let $f_t$, $t \in (-\delta,\delta)$,  be a $C^{k}$ family of  piecewise expanding 
 $C^k$ unimodal maps, with $k\geq 1$.  
Then the following properties are equivalent: 

\begin{itemize} 
\item[A.] For small $t$, the  set of critical relations $R_t$  is constant.
\item[B.] For small $t$, there exists a family $h_t\colon I \to I$  of homeomorphisms so that\\
 $h_t$ is a conjugacy between $f_0$ and $f_t$, $$h_t \circ f_0 = f_t\circ h_t.$$
\item[C.]  For small $t$, there are conjugacies $h_t$, as  in $B$, and we have that  
$$(x,t)\mapsto h_t(x)$$
is continuous and  for each $x \in I$ the function $t \mapsto h_t(x)$  is $C^{k-1 + Lip}$. 
Furthermore   if we restrict $t$ to a compact interval $Q \subset (-\epsilon,\epsilon)$ 
we have that this family is a bounded subset in 
$C^{k-1 + Lip}(Q)$.  (In fact, there is a universal constant $C$ so that
the diameter of this subset is $\le C \sup_{t\in Q} \frac{|f_t|_k}{1-\lambda_{f_t}^{-1}}$.)
\end{itemize}

 Furthermore $A$, $B$ and $C$ imply
\begin{itemize}
\item[D.] For small $t$ we have that $J(f_t,\partial_s f_s|_{s=t})=0$.
\end{itemize}

If we  assume in addition that $f_0$ is stably $\epsilon$-expansive, 
 then D is equivalent to  A, B, and C.
\end{thm}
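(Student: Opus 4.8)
The plan is to establish the equivalences in the natural cycle $A \Rightarrow C \Rightarrow B \Rightarrow A$, then to derive $D$ from these, and finally, under the stable $\epsilon$-expansiveness hypothesis, to close the loop by proving $D \Rightarrow A$ (equivalently $\neg A \Rightarrow \neg D$). The implications $C \Rightarrow B$ and $B \Rightarrow A$ are essentially formal: $C$ trivially contains $B$; and $B \Rightarrow A$ follows because a conjugacy preserves the combinatorics of the critical orbit — if $h_t \circ f_0 = f_t \circ h_t$ with $h_t$ a homeomorphism fixing orientation and the boundary, then $f_t^i(c) = f_t^j(c)$ iff $f_0^i(h_t^{-1}(c)) = f_0^j(h_t^{-1}(c))$, and one checks $h_t^{-1}(c)=c$ since $c$ is the unique turning point, giving $R_t = R_0$.

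The substantive direction is $A \Rightarrow C$. Here I would construct the conjugacies $h_t$ explicitly and track their regularity in $t$. The standard approach for piecewise expanding unimodal maps (and the one matching the ``smooth motions in phase space'' remark and the citation to \cite{BS}) is to define $h_t$ via the kneading-theoretic coding: points of $I$ are coded by their itineraries, and constancy of $R_t$ means the coding map $I \to \{L,C,R\}^{\mathbb{N}}$ has a $t$-independent image, so $h_t$ is the map sending the $f_0$-point with a given itinerary to the $f_t$-point with the same itinerary. Well-definedness and continuity of $h_t$ in $x$ follow from the $\epsilon$-expansiveness of Proposition~\ref{deepness} (uniform expansion forces itinerary cylinders to shrink), so $h_t$ is a well-defined homeomorphism. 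For the regularity in $t$: I would express $h_t(x)$ as a limit of preimages under $f_t$ of points determined by finite itinerary data, and differentiate in $t$ using the chain rule and the formula \eqref{clarif} for $\partial_t f_t^k$; the uniform expansion $\lambda_{f_t}>1$ gives geometric convergence of these derivatives, yielding the $C^{k-1+Lip}$ bound with the stated explicit constant $C\sup_{t\in Q}|f_t|_k/(1-\lambda_{f_t}^{-1})$. The point that $t\mapsto h_t(x)$ is only $C^{k-1+Lip}$ rather than $C^k$ is exactly because $f_t$ is only piecewise $C^k$, so the last derivative picks up the jump and one gains Lipschitz but not full differentiability.

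For $C \Rightarrow D$: differentiating $h_t \circ f_0 = f_t \circ h_t$ in $t$ and combining with \eqref{clarif1}–\eqref{clarif2} shows that $\partial_t h_t$ along the critical orbit is governed by a solution of the twisted cohomological equation \eqref{t} for the vector field $v_t = \partial_s f_s|_{s=t}$; since $h_t$ is continuous at $c$ and the relevant solution $\alpha$ of \eqref{t} vanishes at $c$, Proposition~\ref{stce} gives $J(f_t,v_t)=0$. Concretely, in the periodic case one uses \eqref{clarif1}: if $c$ has period $k_0$ for all $f_t$, then $f_t^{k_0}(c)=c$ identically in $t$, so $\partial_t f_t^{k_0}(c)=0$, whence $J(f_t,v_t)=0$; in the non-periodic case one passes to the limit $k\to\infty$ in the analogous relation using that $h_t(f_0^k(c))=f_t^k(c)$ stays in $I$ while the denominator $Df_t^{k-1}(f_t(c))$ blows up geometrically.

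The main obstacle is the final implication $D \Rightarrow A$ under stable $\epsilon$-expansiveness; I would prove the contrapositive. Suppose $R_{t_0}\ne R_0$ for some $t_0$ arbitrarily close to $0$ — so a critical relation is created or destroyed. The goal is to show $J(f_{t_1},v_{t_1})\ne 0$ for some nearby $t_1$. If $c$ is non-periodic for $f_0$, use continuity of $f\mapsto J(f,\cdot)$ from Proposition~\ref{kerno}: were $J(f_t,v_t)\equiv 0$ near $0$, then $C \Leftarrow D$ via the already-established cycle (run the cohomological-equation argument in reverse: $J(f_t,v_t)=0$ produces the bounded $\alpha$, hence an infinitesimal conjugacy, hence by integration actual conjugacies $h_t$) would force $R_t$ constant, a contradiction. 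If $c$ is periodic for $f_0$ — the delicate case — one cannot use naive continuity of $J$; instead one invokes Proposition~\ref{kerperiodic}: crossing the submanifold $\mathcal{M}$ of maps keeping $c$ periodic changes which constant $C_\pm$ multiplies $J(f_0,v)$, but crucially $J(f_t,v_t)=0$ for $t$ on both sides of $\mathcal{M}$ would still force (via those propositions) $J(f_0,v_0)=0$, and then one must separately show that $J(f_0,v_0)=0$ together with the family leaving the topological class is impossible — here stable $\epsilon$-expansiveness and the goodness condition \eqref{good} (needed for $C_\pm$ to be bounded away from $0$) enter decisively, ruling out the degenerate scenario where the period doubles or the orbit slides off a previously-periodic critical point. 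Assembling this case analysis carefully — tracking the exact parameter intervals and the behavior of $J$ across $\mathcal{M}$ — is where the real work lies.
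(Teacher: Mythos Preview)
Your treatment of the equivalences $A$, $B$, $C$ and of the implication $(A,B,C)\Rightarrow D$ is broadly in line with the paper (the paper routes $A\Rightarrow B$ through analytic continuation of eventually periodic points rather than itinerary coding, and takes $B\Rightarrow C$ as a black-box citation to \cite{BS}, but your variants are reasonable). Your argument for $D$ in the non-periodic case, using the bounded $t$-derivative of $h_t(f_0^k(c))$ against the exponentially growing $Df_t^{k-1}(f_t(c))$, is actually cleaner than the paper's contradiction argument, though it relies on $C$ whereas the paper derives $D$ directly from $A$.

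The genuine gap is in $D\Rightarrow A$. Your contrapositive sketch is circular: you assume $\neg A$, then suppose $J\equiv 0$, and claim this ``produces the bounded $\alpha$, hence an infinitesimal conjugacy, hence by integration actual conjugacies $h_t$'', yielding $A$ and a contradiction. But ``by integration actual conjugacies'' \emph{is} the content of $D\Rightarrow A$; you have restated the goal, not proved it. Moreover, the vector field $\alpha_t$ furnished by Proposition~\ref{stce} is merely bounded, not continuous, so one cannot simply integrate the flow $\dot g=\alpha_t(g)$ and invoke uniqueness to obtain a conjugacy. Your subsequent case analysis via Proposition~\ref{kerperiodic} and the submanifold $\mathcal{M}$ is also not how the paper proceeds (that proposition is used only in Theorem~\ref{create}), and your periodic-case endgame --- ``show that $J(f_0,v_0)=0$ together with the family leaving the topological class is impossible'' --- is again a restatement of what must be proved.

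The paper's mechanism is different and worth noting. From $J(f_t,v_t)=0$ and Proposition~\ref{stce} one gets the bounded $\alpha_t$ with $v_t(c)=\alpha_t(f_t(c))$. One then checks directly (handling the critical point via this identity) that each orbit map $c_n(t):=f_t^n(c)$ solves the ODE $\dot g=\alpha_t(g)$; no uniqueness is needed, only the uniform bound $\sup_t|\alpha_t|_\infty<\infty$, which gives \emph{equicontinuity} of the family $\{c_n\}_n$. Equicontinuity plus stable $\epsilon$-expansiveness then forces the combinatorics to be locally rigid: if $c$ has period $p$ at some $t_0$, all iterates $f_t^i(c)$ stay in fixed $\epsilon$-small intervals for nearby $t$, and expansiveness rules out more than one orbit point per interval, so the period persists. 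An open--closed argument finishes. The key insight you are missing is that one does not integrate $\alpha$ into a conjugacy; one uses it only to bound $|\partial_t c_n(t)|$ uniformly in $n$.
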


\begin{proof}
Note that C trivially implies B and A.
\end{proof}

\begin{proof}[Proof of  A implies B]
This implication is a consequence of Milnor-Thurston theory of kneading invariants, but we 
will give a self-contained argument.

Let $\delta>0$ be so that $R_t$ is constant for $t \in (-\delta, \delta)$.
Note that the itinerary $\sigma^{t}$ of the critical point of $f_t$
is constant for $t \in (-\delta, \delta)$. Indeed, if 
$f^i_{t}(c)=c$ for some $i$ and some $t$,
then by definition $(0,i) \in R_t$, and assumption A implies 
$f^i_s(c)=c$ for every small $s$.  By the continuity of the family $f_t$, this
implies that the itinerary 
of $c$  is constant (also if $R_t=\emptyset$).

Let $\mathcal{P}_t$ be the set of points which are either periodic or eventually periodic 
points  of  $f_t$,  and whose forward orbit does not contain the critical point.  It is easy 
to see that $\mathcal{P}_t$ is dense in $I$. We claim that up to taking
a smaller  $\delta >0$,
each point $p \in \mathcal{P}_0$  
has an analytic continuation $h_t(p)$, defined for every $|t|< \delta$. Moreover 
$$h_t \colon \mathcal{P}_0 \to  \mathcal{P}_t$$
is a bijection. In fact, since the forward orbit of $p$ does not contain the critical point, 
we can find a maximal open interval $Q$, where  the analytic continuation $h_t(p)$ is 
(uniquely) defined. If there exists 
$t_{\infty} \in \partial Q \cap (-\delta,\delta) $,  choose    $t_n\in Q$, with 
$lim_{n \to \infty} t_n= t_{\infty}$.

Since $Q$ is maximal, every  accumulation point $q$ of  the sequence $h_{t_n}(p)$ has a 
priori  the itinerary of $p$,  replacing at least one of its symbols by $C$.  But note that  
since the $f_{t}$ are  piecewise expanding, and since we proved that the itinerary of the 
critical point under
$f_t$ is constant,   every  itinerary  obtained by replacing $C$ 
by either $R$ or  $L$ symbols  in the itinerary of the critical point is forbidden for $p$. 
So $\partial Q\cap(-\delta,\delta)  =\emptyset$ and $h_t(p)$ is defined for every  $t$.  
Of course $h_t(p) \in \mathcal{P}_t$. Furthermore $h_t$ is injective, since $h_t(p)$ has 
the same itinerary as $p$ and distinct points in $\mathcal{P}_0$ have distinct itineraries. 

It remains to prove  that $h_{t_0}(\mathcal{P}_0)= \mathcal{P}_{t_0}$, for every $t_0$. 
This can be achieved by considering a smooth re-parametrization $g_u$ of the family $f_t$ 
such that $g_0=f_{t_0}$ and  and applying  the argument above  to construct $h^{-1}_{t_0}$. 

Due the uniqueness of the analytic  continuation 
\begin{equation} \label{conj} h_t \circ f =  f_t \circ h_t \end{equation}
on $\mathcal{P}_0$.
Moreover $p < q$ implies $h_t(p) < h_t(q)$ for every $t$. By the density of 
$\mathcal{P}_t$, for every $t$, we can extend $h_t$ to a homeomorphism 
$h_t \colon I \to I$.
The continuity of $h_t$ and Eq. (\ref{conj}) imply that $f_0$ is conjugate to  
$f_t$ by $h_t$.
\end{proof}

\begin{proof}[Proof of B implies C]
See \cite[Proposition 2.4]{BS} (the proof  there works for $k\ge 1$).
\end{proof}

 \begin{proof}[Proof of  A, B, C implies D] It is enough to show that A. implies D. 
First, suppose that $R_t\neq \emptyset$. Then $f_t$ has a periodic critical point with prime period $p$, for all small $t$, that is,
$f_t^{p-1}(f_t(c))=c$
for small $t$. Differentiating with respect to $t$, we obtain
$$\partial_t f_t^{p-1}(f_t(c)) + Df^{p-1}_t(f_t(c))\ \partial_t f_t(c)=0 .$$
So (using (\ref{clarif1}) for $f_t$ and $\partial_t f_t$)
$$J(f_t,\partial_t f_t)= \frac{\partial_t (f_t^{p-1}\circ f_t)(c)}{ Df^{p-1}_t(f_t(c))}=
 \frac{\partial_t f_t^{p-1}(f_t(c))}{ Df^{p-1}_t(f_t(c))}+ \partial_t f_t(c)=0.$$

Now assume that  $R_t=\emptyset$ for small $t$ and suppose for
a contradiction that $J(f_{t_0},\partial_{t} f_t|_{t=t_0})\neq 0$ for some  small $t_0$. 
By Proposition \ref{kerno}, Claim B., either $J(f_t,\partial_t f_t) \geq \xi >0$ for every $t$ close to $t_0$, or $J(f_t,\partial_t f_t) \leq \xi < 0$ for every $t$ close to $t_0$. Without loss of generality, assume the first case. Using  (\ref{clarif2}) for $f_t$ and $\partial_t f_t$, 
and the fact that $\theta= \inf_{t,x} |Df_t(x)|>1$, we find
$\delta > 0$ and $k_0\ge 1$ so that 
$$\frac{\partial_t f^{k}_t(c)}{D f^{k-1}_t(f_t(c))}\geq \frac{\xi}{2},
\quad \forall |t-t_0|\le \delta , \forall k \ge k_0.$$
So, 
$$2\geq | f^{k}_{t_0+\delta}(c)-f^{k}_{t_0}(c)|\geq \frac{\xi}{2} \theta^{k-1}\delta,$$
for every $k \ge k_0$, which is absurd since $\theta >1$.
\end{proof}

\begin{proof}[Proof of  D implies A]
We assume  stable
$\epsilon$-expansivity of $f_0$. Consider the set of uniformly bounded functions
$$c_n \colon  \{ t \colon \ |t|< \delta\} \to I,$$
with $c_n(t)=f^n_t(c)$. We claim that this family is equicontinuous.

Write
$v_t = \partial_s f_s|_{s=t}$. By Proposition $\ref{stce}$, there exists
for each $t$  a unique bounded 
function $\alpha_t\colon I \to \mathbb{R}$ satisfying $\alpha_t(c)=0$ and
\begin{equation} \label{nocrit} v_t(x)=\alpha_t(f_t(x))-Df_t(x)\alpha_t(x)\end{equation}
for every $x\not=c$. In addition, since we assumed
$J(f_t,v_t)=0$, we have
\begin{equation}\label{crit} v_t(c)=\alpha_t(f_t(c))\, , \forall t .\end{equation}

Consider a solution
$g\colon \{ t \colon \ |t|< \delta\} \to I$
of the differential equation
\begin{equation}\label{ode} \frac{dg}{dt}(s) = \alpha_t(g(s)).\end{equation}
We claim that the function
$G \colon \{ t \colon \ |t|< \delta\} \to I$
defined by 
$$G(t)=f_t(g(t))$$
is also a solution of (\ref{ode}).  Indeed, as a consequence of (\ref{nocrit}), if 
$g(t_0)\not=c$, we have
\begin{align*}
\frac{dG}{dt}(t_0) &= v_{t_0}(g(t_0))+Df_{t_0}(g(t_0))\frac{dg}{dt}(t_0)\\
&= v_{t_0}(g(t_0))+Df_{t_0}(g(t_0))\alpha_{t_0}(g(t_0))= \alpha_{t_0}(f_{t_0}(g(t_0)))= 
\alpha_{t_0}(G(t_0)).
\end{align*}
If $g(t_0)=c$ then by  (\ref{ode}) we have
$\frac{dg}{dt}(t_0)=0$. Therefore, using that
the $f_t$ are piecewise uniformly Lipschitz and the
family is $C^ 1$, we get
\begin{align*}
\frac{G(t_0+\eta)-G(t_0)}{\eta} &= \frac{f_{t_0+\eta}(g(t_0+\eta))- 
f_{t_0+\eta}(g(t_0))}{\eta} +  
\frac{f_{t_0+\eta}(g(t_0))-f_{t_0}(g(t_0))}{\eta}\\
&= \frac{O(|g(t_0+\eta)-g(t_0)|)}{\eta}  +   v_{t_0}(g(t_0)) + o(h)= v_{t_0}(g(t_0))+ o(\eta).
\end{align*}
So 
$$\frac{dG}{dt}(t_0)= v_{t_0}(g(t_0)) = v_{t_0}(c)=\alpha_{t_0}(f_{t_0}(c))= 
\alpha_{t_0}(f_{t_0}(gt_0))).$$
Consequently $g_n(t)=f_t^n(gt))$ is a solution of (\ref{ode}), for every $n$. 

Of course the constant function 
$c_0(t)=c$
is a solution of (\ref{ode}). Since the functions $\alpha_t$ can be uniformly bounded by a 
constant which is independent of $t$, we conclude by (\ref{ode}) that the set of functions
$c_n(\cdot)$ is equicontinuous.

Suppose now that there is $t_0$ so that
$c$ is a periodic point of $f_{t_0}$.  
If the prime period of $c$ is $p$, choose  open intervals 
$I_0, I_1, \dots I_{p-1}$,
with pairwise disjoint closures, $|I_i| < \epsilon$, 
and  such that 
$$f^i_{t_0}(c) \in I_{i \ \mod \ p}\quad \forall i .$$
Since $\{ c_n(\cdot) \}$ is an  equicontinuous set of functions, there exists 
$\delta_0 > 0$ such that 
\begin{equation}\label{renor}  f^i_{t}(c) \in I_{i \ \mod \ p}\end{equation}
for every $i$ and every $t$ such that $|t-t_0|<\delta_0$. We claim that if $|t-t_0|<\delta_0$ 
then the map  $f_t$ has a periodic critical point with the same itinerary
as that of $c$ for $f_{t_0}$.  By (\ref{renor}), it is enough to show that 
$$N= \# \{f^i_t(c)\colon i \ \mod \ p=1\}=1.$$ 
Define
$$y = \inf  \{f^i_t(c)\colon i \ \mod \ p=1\}.$$
The definition of $y$ implies
$$f^{p}_t[y,f_t(c)]\subset [y,f_t(c)],$$
and 
$$f^i_t[y,f_t(c)] \subset I_{i+1 \ \mod \ p}$$
for every $p$. This implies  
$|f^i_t([y,f_t(c)])|< \epsilon$ for every $i$. By the stable $\epsilon$-expansivity 
of $f_0$ we must have $y=f_t(c)$ which implies $N=1$.

So we conclude that for every itinerary $\sigma$ of length $p$, the set of parameters $\mathcal{O}$ 
such that 
$f_t$ has  a $p$-periodic critical point with itinerary $\sigma$ is an open set.  Of course for 
all parameters in the closure of $\mathcal{O}$, $f_t$  has a $p$-periodic critical point, 
but,  {\it a priori,} not with prime period $p$.  But if we apply the same argument to this 
boundary parameter, we conclude that its critical point has the same itinerary as points in 
$\mathcal{O}$. 
This implies that either $\mathcal{O}=\emptyset$ or $\mathcal{O}=\{ t \colon \ |t|< \delta\}$.
If $\mathcal{O}=\emptyset$ for each finite orbit, then each $f_t$ has an infinite
postcritical orbit and an empty $R_t$.
So the set of critical relations $R_t$ does not depend on $t$.
\end{proof}

We mention an easy consequence of Theorem~\ref{equiv} which will be useful
in the proof of Theorem~\ref{create}:

\begin{cor}[Unstable families]\label{unstable} Let $f_t$ be a $C^1$ family of 
 piecewise expanding $C^1$ unimodal  maps such  that $f_0$ is good and
$$J(f_0,\partial_t f_t|_{t=0})\not =0.$$
\begin{itemize}
\item[A.] If $f_0$ has a periodic critical point  then there exists a sequence of parameters 
$t_n\to 0$
such that the critical point of $f_{t_n}$ is not periodic.
\item[B.]  If $f_0$ has a non periodic critical point then there exists a sequence of 
parameters 
$t_n\to 0$
such that the critical point of $f_{t_n}$ is periodic.
\end{itemize}
\end{cor}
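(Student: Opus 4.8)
The plan is to derive both statements from Theorem~\ref{equiv}, in its contrapositive form: since $f_0$ is good, Theorem~\ref{equiv} tells us that $J(f_0,\partial_t f_t|_{t=0})\neq 0$ forces property A to fail, i.e.\ the set of critical relations $R_t$ cannot be constant for $t$ near $0$. So in both cases we already know that there is a sequence $t_n\to 0$ with $R_{t_n}\neq R_0$; the work is to upgrade this to the sharper dichotomy in A and B, namely that one can arrange the perturbed critical point to switch between ``periodic'' and ``non-periodic.''

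For part A, suppose $f_0$ has a periodic critical point of prime period $p_0$, and assume for contradiction that for all small $t$ the critical point of $f_t$ is periodic. Using the stable $\epsilon$-expansivity of $f_0$ (Proposition~\ref{deepness}, since $f_0$ is good) together with the nesting argument already carried out in the proof of Proposition~\ref{kerperiodic} (intervals $I_0,\dots,I_{p_0-1}$ with disjoint closures, $|I_i|<\epsilon$, capturing the critical orbit), one shows exactly as there that any nearby map whose critical point is periodic must in fact have period $p_0$ and the same itinerary $\sigma$ as $f_0$. Thus $f_t^{p_0}(c)=c$ for all small $t$, so $(0,p_0)\in R_t$ for all $t$; combined with the equality of itineraries this gives $R_t$ constant near $0$, i.e.\ property A holds, contradicting Theorem~\ref{equiv} (the implication $\neg A\Rightarrow \neg D$, valid since $f_0$ is good and hence stably $\epsilon$-expansive). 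Hence there is a sequence $t_n\to 0$ with the critical point of $f_{t_n}$ not periodic.

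For part B, suppose $f_0$ has a non-periodic critical point, and assume for contradiction that the critical point of $f_t$ is non-periodic for all small $t$. Then $R_t=\emptyset$ for all small $t$, so $R_t$ is constant, i.e.\ property A holds; again this contradicts Theorem~\ref{equiv} via $J(f_0,\partial_t f_t|_{t=0})\neq 0$. Hence there is a sequence $t_n\to 0$ with the critical point of $f_{t_n}$ periodic.

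The only real subtlety — and the step I expect to be the main obstacle — is the rigidity argument in part A: one must be sure that a nearby map with periodic critical point genuinely has prime period $p_0$ (not a larger multiple or some unrelated period) and the same combinatorics, so that $(0,p_0)$ lies in $R_t$. This is precisely the $\epsilon$-expansivity/nesting argument already executed twice in the excerpt (in the proofs of Propositions~\ref{deepness} and~\ref{kerperiodic}, and again in the proof of $D\Rightarrow A$ in Theorem~\ref{equiv}), so it can be invoked rather than redone; everything else is a direct appeal to Theorem~\ref{equiv}.
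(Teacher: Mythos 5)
Your part B is fine and is essentially the paper's argument: if the critical point of $f_t$ were non-periodic for all small $t$, then $R_t\equiv\emptyset$ is constant, and $A\Rightarrow D$ of Theorem~\ref{equiv} gives $J(f_0,\partial_t f_t|_{t=0})=0$, a contradiction. The gap is in part A, in the rigidity claim that ``any nearby map whose critical point is periodic must have period $p_0$ and the same itinerary.'' The nesting/expansivity argument you cite does not prove this, and the claim is false in general. What that argument (in the proof of Proposition~\ref{kerperiodic}) actually gives is: for each $n$ there is a neighborhood $W_n$ of $f_0$ such that any $f\in W_n$ whose critical point has prime period $p<n$ satisfies $f^{p_0}(c)=c$; it controls nothing for maps in $W_n$ whose critical period is $\ge n$, because the inclusion $f^i(c)\in I_{i\bmod p_0}$ is only guaranteed for $i\le n$. (In the proof of $D\Rightarrow A$ of Theorem~\ref{equiv} the inclusion for \emph{all} $i$ is obtained from the equicontinuity of $t\mapsto f_t^n(c)$, which is derived from the horizontality condition $D$ --- exactly the hypothesis you do not have here.) Under your contradiction hypothesis the prime period $p_t$ of the critical point of $f_t$ may be unbounded as $t\to 0$ --- indeed, since $J(f_0,v)\neq 0$ one expects parameters with high-period critical orbits to accumulate at $0$ --- so you cannot conclude that $f_t^{p_0}(c)=c$ for all small $t$, hence not that $R_t$ is constant, and no contradiction with Theorem~\ref{equiv} is reached.

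The paper closes exactly this gap differently. Assuming the critical point of $f_t$ is periodic for all $|t|\le\delta_1$, it sets $P_n=\{t:\ f_t^n(c)=c,\ |t|\le\delta_1\}$; these are closed and cover the parameter interval, so by Baire some $P_{n_0}$ contains a nonempty open set, inside which one extracts an open set $U$ of parameters where the prime period (hence $R_t$) is locally constant. Applying $A\Rightarrow D$ on $U$ gives $J(f_t,\partial_s f_s|_{s=t})=0$ for $t\in U$. Since $U$ need not contain $t=0$, the hypothesis $J(f_0,\partial_t f_t|_{t=0})\neq 0$ must first be propagated to a full neighborhood of $0$, which is done via Corollary~\ref{continuity} (continuity of $\Ker(J(f,\cdot))$ at the good map $f_0$) together with the continuity of $t\mapsto\partial_t f_t$ in $\mathcal{B}^0(I)$; this yields the contradiction. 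So to repair your proof you either need this Baire-plus-continuity scheme (note your version never uses Corollary~\ref{continuity}, which is a sign something is missing), or a genuinely new argument forcing the period to stay bounded, which the expansivity lemma alone does not provide.
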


\begin{proof}[Proof of Claim A] By Corollary  \ref{continuity} 
and the 
continuity of $t\mapsto \partial_t f_t$ in the $\mathcal{B}^0(I)$ norm, there exists $\tilde{\delta} > 0$ such that 
\begin{equation}\label{nzero}  J(f_t,\partial_s f_s|_{s=t}) \not =0,
\quad \forall |t|\leq {\delta_0}.
\end{equation}
Suppose by contradiction
that for all parameters $|t|\leq \delta_1 < {\delta_0}$ the critical point of $f_t$ 
is  periodic. Define
$$P_n := \{t\colon \  f_t^n(c)=c \ \mbox{and} \  |t| \leq \delta_1    \}.$$
Of course  $P_n$ is closed. By the Baire Theorem there exists 
$n_0 \ge 1$ so that $P_{n_0}$ contains a nonempty connected open 
set $Q \subset P_{n_0}$.  For each $1\le i \leq n_0$, let $P'_i \subset P_{n_0}$ 
be set of parameters for which $f_t$ has a critical point whose prime period is  equal  or 
larger than $i$.  Of course each $P'_i$ is an open subset of $P_{n_0}$. Let
$$p = \max \{i\colon \ P'_i\not = \emptyset  \}.$$
Then there exists an open set $U$ such that the critical point of $f_t$ 
has prime 
period $p$ if $t\in U$.  In particular the set of critical relations $R_t$ is constant on $U$. 
By the implication $A \Rightarrow D$ in Theorem \ref{equiv}, 
$J(f_t,\partial_t f_t)=0$
for every $t \in U$, which contradicts (\ref{nzero}).
\end{proof}

\begin{proof}[Proof of Claim B]  The proof in this case is even easier. By Proposition
\ref{kerno}~ B., we have (\ref{nzero}) for some $\delta_0>0$. 
If there are non periodic critical points for $f_t$ for all small enough $t$, 
then the set of critical relations $R_t$ is empty for those $t$. 
By $A \Rightarrow D$ in Theorem \ref{equiv}, 
$J(f_t,\partial_t f_t)=0$
for all small enough $t$,  which contradicts (\ref{nzero}).
\end{proof}

\section{Finding or approximating  families tangent to a given  horizontal direction}

We can now state and prove our second main result:

\begin{thm}  \label{create}Let $k\ge 2$, let $f$ be a good piecewise expanding $C^k$
unimodal map, and let  $v, w \in \BB^{k}(I)$ 
satisfy  $v(-1)=v(1)=w(-1)=w(1)=0$,
$J(f,v)=0$ and $J(f,w)\neq 0$.  Then for every $C^k$
family $f_t$ of  piecewise  expanding  $C^k$ unimodal maps such 
that  $f_0=f$ and $\partial_t f_t|_{t=0}=v$,
there exists $\delta > 0$ and  a {\it unique}  
continuous function $b:(-\delta,\delta)\to \real$, such that
$b(0)=0$ and  that
$$\tilde f_t = f_t + b(t)w$$ 
 is topologically conjugate with $f$ for all $|t|<\delta$.

Furthermore this unique function $b$ is in fact $C^{k-1+Lip}$ and
satisfies $b'(0)=0$ (in particular $\partial \tilde f_t|_{t=0}=v$),
and the family $\tilde f_t$ is a $C^{k-1+Lip}$-family of 
piecewise expanding $C^k$ unimodal maps.
 
In addition, there exists a sequence of $C^{k}$ families of
piecewise  expanding  $C^k$ unimodal maps
$t  \mapsto g_{t,n} $ ($t\in (-\delta,\delta)$)
such that 
\begin{itemize} 
\item[-] the map $g_{t,n}$ is topologically conjugate with $g_{0,n}$, for each $t$ and $n$,
\item[-]  the critical point of $g_{0,n}$ is periodic for each $n$,
\item[-]  For each $t$ the map  $g_{t,n}$ converges to the  map 
 $\tilde{f}_{t}$ in 
the $\BB^{k-1}(I)$ topology.
\end{itemize}
\end{thm}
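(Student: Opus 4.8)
The plan is to reduce the problem to a one-dimensional implicit-function-type argument in the parameter $b$, using the functional $J$ as the ``obstruction'' that detects whether $\tilde f_t = f_t + b\,w$ has stayed in the topological class of $f$. Fix the $C^k$ family $f_t$ with $f_0=f$ and $\partial_t f_t|_{t=0}=v$, and consider the two-parameter family
\begin{equation}\label{twopar}
F(t,b) := f_t + b\, w,
\end{equation}
which is a $C^k$ family of piecewise expanding $C^k$ unimodal maps for $(t,b)$ in a neighborhood of $(0,0)$ (shrinking $\delta$ so that conditions I--III and stable $\epsilon$-expansiveness persist; here Proposition~\ref{deepness} applies since $f$ is good). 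The key quantity to control is $j(t,b) := J(F(t,b), \partial_s F(s,b)|_{s=t})$. Note $\partial_s F(s,b)|_{s=t} = \partial_s f_s|_{s=t}$, so $j(t,b) = J(f_t + b w, \partial_s f_s|_{s=t})$. At $(0,0)$ we have $j(0,0)=J(f,v)=0$; the goal is to solve $j(t,b(t))=0$ — but one must be careful, because $J$ is \emph{not} jointly continuous when the critical point is periodic, so the naive implicit function theorem does not apply directly.

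The main steps I would carry out are as follows. \textbf{Step 1 (the non-periodic core).} First treat, as a warm-up, the simpler situation and the open set of parameters where the critical point of $F(t,b)$ is not periodic; there Proposition~\ref{kerno} gives genuine continuity of $j$, and since $\partial_b j$ essentially records $J(f,w)\neq 0$ up to a nonzero multiplicative factor $Df^{k-1}_{\cdot}(\cdot)$-type constant (compare \eqref{clarif1}--\eqref{clarif2} and the computation \eqref{Imp1} in the proof of Proposition~\ref{kerperiodic}), one gets a locally unique continuous solution branch $b(t)$ with $j(t,b(t))=0$. \textbf{Step 2 (gluing across periodic parameters).} For parameters where the critical point of $F(t,b)$ is periodic of prime period $p$, invoke Proposition~\ref{kerperiodic}: near such a point the parameter space splits as $\mathcal M \cup W_+ \cup W_-$, and on each piece $j$ is $\eta$-close to $J(f_0,\cdot)$, $C_+ J(f_0,\cdot)$, resp. $C_- J(f_0,\cdot)$ — all with the \emph{same sign} as $J(f_0,v)$, since $C_\pm>0$. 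This is exactly the ``$j(t_n,b_n)\to 0 \iff$ limit value $=0$'' mechanism recorded in Corollary~\ref{continuity}; it lets one show the solution set $\{j=0\}$ is the graph of a single continuous function $b$ through $(0,0)$ even across periodic parameters, and that $\tilde f_t$ has constant critical relations, hence by $D\Rightarrow A$ (using stable $\epsilon$-expansiveness) and $A\Rightarrow B$ of Theorem~\ref{equiv}, is topologically conjugate to $f$. \textbf{Step 3 (uniqueness and $b(0)=0$).} Uniqueness of $b$ among continuous functions with $b(0)=0$ follows from the sign argument of Corollary~\ref{unstable}: if $b$ were a continuous function with $\tilde f_t$ conjugate to $f$, then $J(\tilde f_t, \partial_s\tilde f_s|_{s=t})=0$ by $B\Rightarrow D$, and two such branches cannot separate without $j$ changing sign, contradicting Propositions~\ref{kerno}/\ref{kerperiodic}. \textbf{Step 4 (regularity of $b$).} To upgrade $b$ from continuous to $C^{k-1+Lip}$: on the open set of parameters with non-periodic critical point, $j(t,b)$ is $C^{k-1+Lip}$ in an appropriate sense (the critical orbit sum converges geometrically and each term is as smooth as the family allows, losing one derivative at the critical point as usual — cf. $B\Rightarrow C$ of Theorem~\ref{equiv} and \cite[Proposition 2.4]{BS}), so the implicit function theorem in the $C^{k-1+Lip}$ category gives the regularity of $b$ there; the Whitney-type extension across the closed (often nowhere dense, or else open-and-everything) set of periodic parameters is handled by the quantitative estimates in Proposition~\ref{kerperiodic}(A) together with $B\Rightarrow C$ of Theorem~\ref{equiv} applied to the conjugacies. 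Differentiating $j(t,b(t))=0$ at $t=0$ and using $\partial_t j|_{(0,0)}$, which is $J(f,\partial_t v_t|\cdots)$-type but vanishes because $\partial_s f_s|_{s=0}=v$ is \emph{constant} to first order in the relevant sense — more precisely, the $t$-derivative of $j$ at $(0,0)$ along the solution is forced to vanish since $j\equiv 0$ on the branch and $\partial_b j\neq 0$ — yields $b'(0)=0$, hence $\partial_t\tilde f_t|_{t=0}=v+b'(0)w=v$.

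\textbf{Step 5 (the approximating periodic families $g_{t,n}$).} For the final clause, perturb $\tilde f_t$: near each $t$, since $J$ evaluated on the periodic-critical-point locus is controlled, one can find maps $g_{0,n}$ with periodic critical point converging to $\tilde f_0$ in $\BB^{k-1}(I)$ (periodicity of the critical point is a ``codimension-one'' phenomenon cut out by $F=0$ as in \eqref{Imp1}, and such parameters are dense enough in the relevant stratum), and then apply Theorem~\ref{equiv} ($A\Rightarrow C$) \emph{to a family through $g_{0,n}$ with constant critical relations} to get $g_{t,n}$ topologically conjugate to $g_{0,n}$; concretely one re-runs Steps 1--4 with $f$ replaced by $g_{0,n}$ and the same $w$ (checking $J(g_{0,n},w)\neq 0$ for $n$ large, which holds by the continuity statements of Propositions~\ref{kerno}/\ref{kerperiodic}), producing the family $g_{t,n}=$ (a perturbation of $f_t$) $+\,b_n(t)w$, and the convergence $g_{t,n}\to\tilde f_t$ in $\BB^{k-1}(I)$ follows from $b_n(t)\to b(t)$ and the construction.

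\textbf{Main obstacle.} The hard part is Step 4 combined with the gluing in Step 2: one must show that the solution branch $b$ is not merely continuous but $C^{k-1+Lip}$ \emph{uniformly across} the parameters where the critical point becomes periodic, where $J$ genuinely jumps (by the factors $C_\pm$). The estimates of Proposition~\ref{kerperiodic}(A) are designed exactly for this, but assembling them into a clean implicit-function/Whitney-extension statement — and checking that the jump factors $C_\pm$, being positive, never destroy the transversality $\partial_b j\neq 0$ that drives the whole argument — is the delicate technical core.
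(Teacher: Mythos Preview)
Your Steps 1--3 are essentially the paper's argument (the paper phrases existence as an ODE: one defines the continuous direction field $d(t,\theta)=-J(f_{(t,\theta)},v_t)/J(f_{(t,\theta)},w)$ using Corollary~\ref{continuity}, solves $b'=d(t,b)$ by Peano, and then invokes $D\Rightarrow B$; uniqueness is obtained by filling the region between two putative branches with ODE solutions and deriving $J(f_{t_0}+\theta w,w)=0$ for a sequence $(t_0,\theta)\to(0,0)$, contradicting $J(f,w)\ne0$). Your identification of the ``main obstacle'' is also correct.

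The genuine gap is in Step~4. Your plan is to apply an implicit function theorem to $j(t,b)=J(f_t+bw,v_t)$ as a function of the two independent variables $(t,b)$, asserting that ``the critical orbit sum converges geometrically and each term is as smooth as the family allows.'' This fails: differentiating the $i$-th summand $v_t(y_i)/D(f_t+bw)^i$ in $t$ (or in $b$) produces a factor $\partial_t y_i=\partial_t (f_t+bw)^i(c)$, which by \eqref{clarif} grows like $D(f_t+bw)^{i-1}$, so the differentiated series is not summable. In short, $j$ is \emph{not} $C^1$ on the two-parameter domain, and no Whitney extension will repair this. What is true---and what the paper exploits---is that \emph{along the solution curve} $b=b(t)$ one has $y_i(t)=h_t(f^i(c))$, and then $\partial_t y_i$ is bounded \emph{uniformly in $i$} by $B\Rightarrow C$ of Theorem~\ref{equiv}. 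But this uses the conjugacies $h_t$, whose regularity in $t$ is only as good as that of the family $\tilde f_t$, which in turn depends on the regularity of $b$: the bootstrap stalls at $C^{1+\mathrm{Lip}}$ and never reaches $C^2$.

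The paper's resolution is to \emph{reverse the order of your Steps~4 and~5}: the periodic approximants are not an afterthought but the engine of regularity. Using Corollary~\ref{unstable} one finds $\theta_n\to0$ with $f+\theta_nw$ having periodic critical point of period $p_n$, and one solves the same ODE with initial condition $b_n(0)=\theta_n$, producing $g_{t,n}=f_t+b_n(t)w$. Since $g_{t,n}$ has periodic critical point, the relation $F_n(t,\theta)=(f_t+\theta w)^{p_n}(c)-c=0$ is a \emph{finite} $C^k$ equation with $\partial_\theta F_n\ne0$ (exactly the computation \eqref{Imp1}), so the classical implicit function theorem gives each $b_n\in C^k$ outright. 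One then proves \emph{uniform} $C^{k-1+\mathrm{Lip}}$ bounds on the $b_n$ via the explicit formula $b_n'=-J(g_{t,n},v_t)/J(g_{t,n},w)$ and the conjugacies from $B\Rightarrow C$ (now legitimately available since each $g_{\cdot,n}$ is a $C^k$ family), and passes to the limit using uniqueness. The regularity of $b$ is thus inherited from the periodic families, not proved directly.
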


\begin{proof}[Proof of the existence of $\tilde f_t$] 
Note that if $f_0(c)=+1$, then either $f_t(c)=+1$ for all small enough $t$
(in which case we may take $b(t)\equiv 0$, so that existence of $\tilde f_t$
is proved) or $f_t(c) <1$ for all nonzero small enough $t$.
Denote $v_t = \partial_s f_s|_{s=t}$. For small $\eta >0$
set $M_\eta=\{|t|< \eta, \ |\theta|< \eta \}$ and consider
\begin{equation}\label{fth}
f_{(t,\theta)}=f_t + \theta w , \quad
(t,\theta)\in M_\eta   .
\end{equation}
(In fact, if $f_0(c)=+1$ but $f_t(c)<1$ for all small nonzero $t$ we must
 take $M_\eta=\{|t|<\eta, |\theta|<\Theta(t)\}$ with
 $\Theta$ a $C^ 1$ function so that $\Theta(0)=0$, $\Theta(t)>0$
for $t\ne 0$.)
Since $J(f_0,v_0)=0$ but $J(f_0,w)\ne 0$,
Propositions \ref{kerperiodic} and \ref{kerno} and Corollary ~ \ref{continuity} imply that if  
$\eta$ is small enough then for all  $(t,\theta)\in M_\eta$
the linear space
$$ 
\mathbb L(t,\theta)=\{ (\alpha ,\beta) \in \real^2\ \colon 
J(f_{(t,\theta)},\alpha v_t +\beta w)=0   \}
$$ 
is a one-dimensional subspace of $\real^2$, which depends continuously
on $(t,\theta)$ and never coincides
with the vertical line $\{0\}\times \real$. In other words:
There exists a uniquely defined  function $d:M_\eta \to \real$ 
so that
\begin{equation}\label{d()}
v_t+ d(t,\theta)w \in \Ker (J(f_{(t,\theta)},\cdot)) .
\end{equation}
In addition, $d(0,0)=0$ and $d$ is continuous.
Consider the $C^1$-integral curve $b$ of the 
ordinary differential  equation
$$\frac{db}{dt} = d(t,b(t)),\quad  b(0)=0.
$$
Since $d(0,0)=0$, if $\eta$ is small, then the solution $b$ is defined for 
$|t| < \eta$.  
As a consequence,   the family 
$ \tilde f_{t}=f_{(t,b(t))}= f_t + b(t)w$
satisfies
\begin{equation*}
J(\tilde f_{t},\partial_s \tilde f_{s}|_{s=t})= 0 ,\quad
\forall |t|<\eta .
\end{equation*}
By $D\Rightarrow B$ in Theorem~\ref{equiv},  $\tilde f_t$
is topologically conjugate with $f_0$ for small  $t$.
\end{proof}

\begin{proof}[Proof of the uniqueness of $\tilde f_t$.] Suppose that 
$\underline b$, $\bar b$ are two continuous functions with
$\underline b(0)=\bar b (0)=0$ and such that both maps
\begin{equation} \label{tf} f_t +\underline  b(t)w,\quad  f_t + \bar b(t)w,\end{equation}
are topologically conjugate to $f$ for each small $|t|<\delta$.
Using the map $d:M_\eta\to \real$ from (\ref{d()}) in the proof of the existence of 
$\tilde f_t$, choose $0<\hat{\eta} < \tilde{\eta} < \eta$ such that if $(t_0,\theta_0)
\in M_{ \hat{\eta}}$ then the ordinary differential equation 
\begin{equation}\label{ode2} \frac{d b(t)}{dt} = d(t,b(t))\end{equation}
with initial condition $b(t_0)= \theta_0$, has a $C^1$-solution defined for every 
$|t|< \tilde{\eta}$, and, moreover, we have  $|b(t)|< \tilde{\eta}$ for $|t|< \tilde{\eta}$. 
Such $\hat{\eta}$, $\tilde{\eta}$ exist, since $d(0,0)=0$.

Suppose there is $|t_0|< \hat{\eta}$ such that $\underline b(t_0)\neq \bar b(t_0)$.  
Since $\bar b$ and $\underline b$ are continuous, up to taking a smaller
$t_0$ we may assume that $\max(|\underline b(t_0)|,| \bar b(t_0)|)< \hat \eta$.
To fix ideas, assume $0\le \underline b(t_0) <\bar b(t_0)$ (the other cases are
similar).
Then for every 
$\theta_0 \in (\underline b(t_0), \bar b(t_0))$
we can find a solution $$ b\colon (-\tilde \eta, \tilde \eta)\to \mathbb{R}$$ 
for
(\ref{ode2})  such that $b(t_0)=\theta_0$. 
By the Intermediate Value Theorem, there exists $t_1 \in [0,t_0)$ such that 
$b(t_1)=\underline b(t_1)$. 

Note that by (\ref{ode2}) and the definition of $d(\cdot,\cdot)$
\begin{equation}\label{ift} J(f_t+ b(t)w, v_t+  b'(t)w)=0 .\end{equation}
Thus, by $D\Rightarrow B$ in Theorem \ref{equiv}  for $b$, and by assumption
for $\underline b$, $\bar b$, 
all maps 
$$f_t + b(t)w,\quad f_t+\underline b(t)w,\quad f_t+\bar b(t)w, 
\quad |t|<\hat \eta
$$
are in the  topological class of $f$. 
As a consequence, for every 
$\theta\in  (\underline b(t_0), \bar b(t_0))$,
the map $f_{t_0}+\theta w$ is topologically conjugate with $f$. Consequently there is no change of combinatorics in the $C^\infty$ family 
of piecewise expanding $C^k$ unimodal maps
$$
s\mapsto f_{t_0} + (\theta_0+s)w,
\quad |s| < \eta(\theta_0),
$$
and $B \Rightarrow D$ in Theorem~\ref{equiv} gives $J(f_{t_0}+\theta_0 w,w)=0$.

Taking a sequence $t_n\to 0$
such that $\underline b(t_n)\neq \bar b(t_n)$, the argument above
gives a sequence $\theta_n\to 0$
so that $J(f_{t_n}+\theta_n w,w)=0$, and
Corollary~\ref{continuity} implies
$J(f_{0},w)=0$, contradicting the assumption on $w$. 
\end{proof}

\begin{proof}[Proof of the  $C^{k-1+Lip}$ regularity, construction
of $g_{t,n}$]
Recall  (\ref{fth}) and the characterization (\ref{d()}) of $d(\cdot,\cdot)$.
By Corollary \ref{unstable}.B, there exists a sequence $\theta_n\to 0$ such that 
$f_{(0,\theta_n)}$ has a periodic critical point (if $f_0$ has a periodic critical point, 
define $\theta_n=0$, for every $n$).  Consider the $C^1$-integral curves $b_n$ of the 
ordinary differential  equation
\begin{equation}
\label{oden}
\frac{db_n}{dt} = d(t,b_n(t)),\quad  b_n(0)=\theta_n.
\end{equation}

Note that since $d(0,0)=0$, if $\eta$ is small, then the solution $b_n$ is defined for 
$|t| < \eta$, provided $n$ is large enough.  
As a consequence,  for all large enough  $n$, 
\begin{equation}\label{family} g_{t,n}=f_{(t,b_n(t))}= f_t + b_n(t)w\end{equation}
satisfies
\begin{equation}\label{inker}
J(g_{t,n},\partial_s g_{s,n}|_{s=t})= 0 ,\quad
\forall |t|<\eta .
\end{equation}

Let $p_n$ be the prime period of the turning point of $g_{n,0}= f_{(0,\theta_n)}$.  
By (\ref{inker}) and $D \Rightarrow B$ in Theorem \ref{equiv} 
we have that $g_{t,n}$ is topologically conjugate with $g_{n,0}$, 
so $g_{t,n}$ has a critical point with the same prime period $p_n$. 

We shall first prove that each $g_{t,n}\in \UU^{k}(I)$, by showing that
each function $b_n$ is $C^k$.
Indeed consider the non-linear functional 
 $$F_n(t,\theta)= f_{(t,\theta)}^{p_n}(c) - c.$$
Then $F_n$ is $C^{k}$  on $M_\eta$,
and  if $f_{(t,\theta)}$ has a periodic point with prime period $p_n$ our
assumption on $w$ gives  
(recalling (\ref{clarif}), as for
(\ref{Imp1}))
$$\partial_\theta F_n(t,\theta) = 
Df^{p_n-1}_{(t,\theta)}(f_{(t,\theta)}(c)) J(f_{(t,\theta)}, w) \not = 0.$$
Since
$F_n(t,b_n(t))=0$, 
the Implicit  Function Theorem implies that $t\mapsto b_n(t)$ is $C^{k}$. 
For further use, 
note also that
$$\partial_t F_n(t,b_n(t)) + \partial_\theta F(t,b_n(t))b_n'(t)=0,$$
and since
$\partial_t F_n(t,\theta) = Df^{p_n-1}_{(t,\theta)}(f_{(t,\theta)}(c)) J(g_{t,n},v_t) $,
we obtain
\begin{equation} \label{quotient} b_n' (t)= -\frac{J(g_{t,n},v_t)}{J(g_{t,n},w)}.
\end{equation}

We shall next show that  the families
$\{(t,x)\mapsto g_{t,n}(x), \quad n \in \mathbb{N}\}$
form a bounded subset of $C^{2}$
(in the sense of families (\ref{families})) as the first step in the
inductive proof that this set is bounded for $C^{k}$.  
Let $\lambda=\inf_{t,n} \lambda_{g_{t,n}}$. We have $\lambda > 1$.

Since $f_t$ and $w$ are  in $\BB^{2}(I)$,   
and $b_n$, $b'_n$ are uniformly  bounded in $n$
(use (\ref{oden})),  there exist  
by the definition (\ref{family})
uniform  upper bounds for the derivatives
 $$\partial_t \ g_{t,n},\ \partial_x \ g_{t,n} , \  \partial^2_x \ g_{t,n},\  
\partial^2_{xt} \ g_{t,n},\ \partial^2_{tx}\  g_{t,n}.$$
So we must only estimate 
$\partial^2_{tt} \  g_{t,n}$. By (\ref{family}),  it is enough to show that 
 $\{ b_n\}_n$ is a bounded subset in $C^{2}$. Since we already
 know that each $b_n$ is $C^2$, it is enough to get an $n$-uniform bound
on the Lipschitz constant of $b_n'$.
In view of  (\ref{quotient}),  we first show that for every vector field $u\in \BB^1(I)$, 
the map
$t\mapsto J(g_{t,n},u)$
is Lipschitz, and its  Lipschitz norm does not depend on $n$.
By  $B \Rightarrow C$  in Theorem ~\ref{equiv}, the conjugacies 
$h_{t,n}\circ g_{0,n} = g_{t,n}\circ h_{t,n}$
are such that (the $C^{k-1+Lip}$ map) $t \mapsto h_{t,n}(x)$
is $K$-Lipschitz, with
$$
K \leq  \tilde{C} \ \frac{\sup_{n, |t|< \eta} |v_t+b'_n(t)w|_0}{1-\lambda^{-1}}
.
$$
This implies in particular that $t\mapsto u(g^i_{t,n}(c))$ is Lipschitz
uniformly in $t$, $i$ and $n$.

So (we omit $n$ in $g_{t,n}$  and $h_{t,n}$to avoid a cumbersome notation) 
\begin{align} \label{Jesti} &|J(g_{t + \delta},u)- J(g_{t},u)|
\leq  \sum_{i =0}^{p_n-1}\Big| \frac{u(g^i_{t+\delta}(c))}{Dg_{t+\delta}^i(g_{t+\delta} (c))}
-\frac{u(g^i_t(c))}{Dg_t^i(g_t(c))}\Big|\\
\nonumber & \quad\leq \sum_{i =0}^{p_n-1}\frac{|u(h_{t+\delta}(g^i_0(c))-u(h_{t}(g^i_0(c))|}
{|Dg_{t+\delta}^i(g_t(c))|}
\\ \nonumber &\qquad\qquad
+   \sum_{i =0}^{p_n-1}|u(g^i_t(c))|\Big| 
\frac{1}{Dg_{t+\delta}^i(g_{t+\delta} (c))}-\frac{1}{Dg_t^i(g_t(c))}\Big|.
\end{align}

Note that   
\begin{align}\label{dstep}
&\Big| \frac{1}{Dg_{t+\delta}^i(g_t(c))}-\frac{1}{Dg_t^i(g_t(c))}\Big| \\ 
\nonumber &
\leq  \sum_{j=0}^{i-1} \frac{1}{\lambda^{i-1}} \Big|   
\frac{1}{Dg_{t+\delta}(h_{t+\delta}(g_0^{j+1}(c)))}-\frac{1}{Dg_t(h_{t}(g_0^{j+1}(c)))}\Big| \\
\nonumber &\leq \frac{1}{\lambda^{i-1}} \biggl (\sum_{j=0}^{i-1} \Big|   
\frac{1}{Dg_{t+\delta}(h_{t+\delta}(g_0^{j+1}(c)))}
-\frac{1}{Dg_t(h_{t+\delta}(g_0^{j+1}(c)))}\Big| \\
\nonumber &\qquad\qquad+ 
 \Big|   \frac{1}{Dg_{t}(h_{t+\delta}(g_0^{j+1}(c)))}-\frac{1}{Dg_t(h_{t}(g_0^{j+1}(c)))}\Big|
\biggr )
\leq C\frac{i}{\lambda^i} \delta.
\end{align}
In the last step we used that  $\partial^2_{tx} \ g_{t,n}$ is  bounded uniformly in $n$.
So  (\ref{Jesti}) gives
$$
|J(g_{t + \delta,n},u)- J(g_{t,n},u)|\leq\sum_{i =0}^{ p_n-1} 
\Big[  \frac{C|u|_1\delta}{\lambda^i} + \frac{|u|_0i\delta}{\lambda^i}\Big]
\leq C|u|_1\delta,$$
which proves
that $t\mapsto J(g_{t,n},u)$ is $C|u|_1$-Lipschitz, 
uniformly in $n$.

Next,
we obtain  from (\ref{quotient}) that
\begin{align*}
|b'_n(t+\delta)- b'_n(t)|
&\leq \frac{|J(g_{t+\delta},v_{t+\delta})-J(g_{t+\delta},v_{t})| +
|J(g_{t+\delta},v_{t})-J(g_{t},v_t)|}{|J(g_{t+\delta},w)|} 
\\
&\quad + |J(g_{t},v_t)|\Big| \frac{1}{J(g_{t+\delta},w)}-\frac{1}{J(g_{t},w)}     \Big|
\leq K\delta \max(\sup_t |v_t|_1,|w|_1).
\end{align*}
We used that $J(g,\cdot)$ is linear and that $J(g_{t,n},w)$ is bounded away from zero and infinity, uniformly in $n$
and  $|t| < \eta$ (by Propositions~\ref{kerno} and \ref{kerperiodic} since $g_{t,n}$ and
$f_0$ are $\BB^ 2(I)$-close, using that $\sup_{n,t}|b_n(t)|<\infty$).

This proves our
claim that $b_n$ is $C^{1+Lip}$, and thus $C^2$, uniformly
in $n$, and thus the uniform $C^2$ claim on $g_{t,n}$. 
Note that  by $B \Rightarrow C$ in Theorem~\ref{equiv} the maps
$t\mapsto h_{t,n}(x)$ are $C^{1+Lip}$ uniformly in $n$ and $x$.
  If $k=2$, we are done.
If $k\ge 3$,  we have concluded
the first step in the induction.

Before we perform the inductive step, we introduce some notation
and terminology. Let $X$ be a set and let $f_\lambda$, $\lambda \in \Lambda$, be an indexed family  $\mathcal{F}$ of  functions on $X$.  For each formal {\it monomial}  $\lambda_1\lambda_2\dots\lambda_n$, we can associate the function $f_{\lambda_1} f_{\lambda_2}\dots f_{\lambda_n}$. This function is called a $\mathcal{F}$-monomial combination of degree $n$. A {\it $\mathcal{F}$-polynomial} combination of degree $n$ is a finite sum of $\mathcal{F}$-monomials whose maximal degree is $n$.   Note that if $\Lambda_1$ is a finite subset of $\Lambda$  and $P_n \in \mathbb{N}_n[\Lambda_1]$ is a polynomial with non  negative integer coefficients, we can associate to it a $\mathcal{F}$-polynomial combination of degree $n$. We will call this combination the  $P$-combination of the family $\mathcal{F}$.

If $\mathcal{F}_1$ and $\mathcal{F}_2$ are  families  indexed by $\Lambda_1$ and $\Lambda_2$, we will denote by $\mathcal{F}_1\cup \mathcal{F}_2$ the disjoint union of these  families, indexed by the disjoint union $\Lambda_1$ and $\Lambda_2$.

 If $X$ is an open interval,  all functions in $\mathcal{F}$ are differentiable and $\mathcal{F}'$ is the indexed family of derivatives of functions in $\mathcal{F}$,  then  the derivative of  a sum of $m$ $S$-monomials of degree $n$ is a sum of $m\cdot n$ $\mathcal{F}\cup \mathcal{F}'$-monomials of degree $n$.

Suppose by induction that $\{ b_n\}_n$ is a bounded family in $C^{q}$, with $2\leq q<  k$. Then 
$g_{t,n}$ is a $C^q$ family.  By  $B \Rightarrow C$  in Theorem ~\ref{equiv}, the conjugacies 
$h_{t,n}\circ g_{0,n} = g_{t,n}\circ h_{t,n}$
are such that $t\mapsto h_{t,n}(x) \in C^{q-1+Lip}$,
uniformly in $n$ and $x$, and,  setting
$$\mathcal{C}_{n,i}^{q-1}=\{ \partial_t^j h_{t,n}(g_{0,n}^\ell(c))\colon  \ 1\leq  j\leq q-1,   \ell\leq i  \},$$
then
\begin{equation}\label{familyc}\cup_n  \mathcal{C}_{n,p_n-1}^{q-1}\end{equation}
is a bounded family in $C^{Lip}$. Note also that for every $x\neq c$
$$\partial_x^a\partial_t^b g_{t,n}(x)=\partial_t^b  \partial_x^a g_{t,n}(x)= \partial_x^a f_t (x)+ \partial_t^b b_n(t)\  \partial_x^a w(x)$$
for every $a\leq k$ and $b\leq q$. In particular for each $i< p_n$, if we define the indexed families of functions  
$$\mathcal{G}^q_{n,i}:=\{\partial_x^a\partial_t^b g_{t,n} (h_{t,n}(g_{0,n}^\ell(c)))\colon \ b< q, a\leq q, 1\leq \ell \leq i \}$$
then 
\begin{equation}\label{familyg} \cup_n \mathcal{G}^q_{n,p_n-1}\end{equation}
is also a bounded subset in $C^{Lip}$.  It is easy to show by induction on $q$ that  there exist $C_q$ and $d_q$ such that for each $1\leq \ell< p_n$,  there exists a $P_{q,\ell}$-combination $\psi_{n,\ell,q}$ of the family $\mathcal{C}_{n,\ell}^{q-1} \cup \mathcal{G}^q_{n,\ell}$  such that
$$\partial^{q-1}_t \frac{1}{Dg_{t,n}(h_t(g_{0,n}^\ell(c)))}= \frac{\psi_{n,\ell,q}}{(Dg_{t,n}(h_t(g_{0,n}^\ell(c)))^{2^{q-1}}},$$
and $P_{q,\ell}$ is a sum of at most $C_q$ monomials of maximal degree $d_q$. In particular if we define the indexed families
$$\mathcal{F}^{q-1}_{n,i}:= \{  \partial^r_t \frac{1}{Dg_{t,n}(h_t(g_{0,n}^\ell(c)))}\colon \  0\leq r\leq q-1, 1\leq \ell< p_n   \}$$
then 
\begin{equation}\label{familyf}\cup_n \mathcal{F}^{q-1}_{n,p_n-1}\end{equation}
 is a bounded set in $C^{Lip}$.

Let $u \in \mathcal{B}^k(I)$. We claim that 
$\{  J(g_{t,n},u)\}_n$
is a bounded subset of functions in $C^{q-1+Lip}$.  Indeed, for each $n$ and $i< p_n$, define the indexed families of functions
$$\mathcal{O}_{n,i}^{q-1}=\{  D_x^{j} u (h_{t,n}(g_{0,n}^\ell(c)))\colon \   0\leq j \leq q-1, 1\leq \ell\leq i\},$$                                                                                                      
Of course
\begin{equation}\label{familyo}\cup_n \mathcal{O}_{n,p_n-1}^{q-1}\end{equation}
is a bounded subset of functions in $C^{Lip}$.

Note that for each $i < p_n$ 

$$u_{0,i,n}:= \frac{u(g^i_{t,n}(c))}{Dg_{t,n}^i(g_{t,n}(c))}=u(h_{t,n}(g^i_{0,n}(c))) \prod_{\ell=1}^{i} \frac{1}{Dg_{t,n}(h_{t,n}(g_{0,n}^{\ell}(c)))},$$
in particular  there exists a monomial $P_{0,i}$ of degree $i+1$ such that $u_{0,i,n}$ is a $P_{0,i}$-combination of the family $\mathcal{C}_{n,i}^0\cup \mathcal{F}_{n,i}^0\cup \mathcal{O}_{n,i}^0$. It can be easily proven by induction on $q$ that for every $1\leq i < p_n$  there exists a polynomial $P_{q,i}$ such that 
$$\partial_t^{q-1} \ \frac{u(g^i_{t,n}(c))}{Dg_{t,n}^i(g_{t,n}(c))}$$
is a $P_{q,i}$-combination of the family $\mathcal{C}_{n,i}^{q-1}\cup \mathcal{F}_{n,i}^{q-1}\cup \mathcal{O}_{n,i}^{q-1}$. Furthermore $P_{q,i}$ is the sum of at most  $(i+q-1)!/i!$  monomials with maximal degree $i+q$, and each monomial contains at least $\max\{0, i-q+1\}$ indexes in 
$\mathcal{F}^{0}_{n,i}$. So if 
$$\bigcup_n  \mathcal{C}_{n,p_n-1}^{q-1}\cup \mathcal{F}_{n,p_n-1}^{q-1}\cup \mathcal{O}_{n,p_n-1}^{q-1}.$$
belongs to the ball of radius $R$ in $C^{Lip}$, it is easy to see that

\begin{equation}|\partial_t^{q-1} J(g_{t,n},u)|_{Lip}\leq 
\sum_{j=0}^{q-1} \frac{(j+q)!}{j!} R^{j+1+q} +\sum_{i\geq q}^{\infty} \frac{(i+q)!}{i!} \frac{R^q }{\lambda^{i-q}} .
\end{equation} 
Thus, by (\ref{quotient}),   $b_n$ belongs to a bounded subset in $C^{q+Lip}$, and
thus in $C^{q+1}$.  That concludes the induction step. In particular  $\{g_{t,n}(x), n \}$ belongs to a bounded set of 
$C^{k}$ families (in the sense of families (\ref{families})).

We can thus choose a convergent subsequence
$\lim_{j\to \infty} g_{n_j,t}= g_{\infty,t}$ in the $\BB^{k-1}(I)$-topology,
where the family $g_{\infty,t}$ is a $C^{k-1+Lip}$ family
of piecewise expanding $C^k$ unimodal maps.  Note that $g_{\infty,0}=f$. Since 
$J(g_{t,n},\partial_t g_{t,n})=0$
and
$$\lim_{j\to
\infty}\partial_t g_{n_j,t}|_{t=t_0}=\partial_t g_{\infty,t}|_{t=t_0},$$
in the  $\BB^0(I)$ topology, we conclude, by Corollary \ref{continuity}, that
$J( g_{\infty,t},\partial_s g_{\infty,s}|_{s=t})=0$.
By $D \Rightarrow B$ in Theorem \ref{equiv}, the map
$g_{\infty,t}$ is topologically conjugate 
with $f$.
By uniqueness, $g_{\infty,t}=\tilde f_t$ and $\lim_{n\to \infty}b_n=b$.
\end{proof}

 For $k\ge 2$,
 let $f\in \mathcal{U}^k(I)$ be a good map, and let $w\in \mathcal{B}^k(I)$  
 be such that $w(-1)=w(1)=0$ and $J(f,w)\neq 0$. Using a technique similar to the proof of Theorem \ref{create}, one can prove that there exist a neighborhood $\mathcal{V}_1$ of $f$  and a neighborhood $\mathcal{V}_2$ of $0$ in $\Ker J(f,\cdot)$ 
 (we consider $J(f,\cdot): \{ v\in \BB^k, v(-1)=v(1)=0\} \to \real$) such that each topological class in $\mathcal{V}_1$ is of the form
$\{ f+ v+\psi(v) w\colon \ v \in \mathcal{V}_2    \}$,
where $\psi$ is a   $C^{k-1+Lip}$ real-valued  function defined in $\mathcal{V}_2$. Moreover if $f_n\in \mathcal{V}_1$ for all integers $n\ge 0$,
with $\lim_{n\to \infty}f_n = f_\infty \in \mathcal{V}_1$, and if $\psi_{n}\colon \mathcal{V}_2\to \mathbb{R}$ defines the topological class of $f_n$ in $\mathcal{V}_1$, then $\lim_{n \to \infty} \psi_n= \psi_\infty$ in $C^{k-1}$ defines the topological
class of $f_\infty$.

We end by mentioning two immediate corollaries of Theorem \ref{create}.

\begin{cor} Let $f$ be a piecewise expanding $C^k$
 unimodal map and $v\in\BB^{k}(I)$, with $k\ge 2$, such that 
 $v(-1)=v(1)=0$ and
$J(f,v)=0$.
Then there exists a $C^{k-1+Lip}$ family of piecewise expanding $C^k$ unimodal maps 
$\tilde f_t$ such that $\tilde f_0=f$, $\partial_t \tilde f_t|_{t=0}=v$ and $\tilde  f_t$ 
is topologically conjugate with $f$, for every $t$.
\end{cor}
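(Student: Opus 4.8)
The plan is to deduce the corollary from Theorem~\ref{create} when $f$ is good, and to settle the one remaining case---$c$ periodic but the goodness inequality~(\ref{good}) failing---separately, by an implicit function theorem argument. In either case I would first fix auxiliary data: as is easily arranged (cf.\ the remarks after the definition of $J$ in Section~3 and the opening of the proof of Proposition~\ref{kerperiodic}), choose $w\in\BB^k(I)$ with $w(-1)=w(1)=0$ and $J(f,w)\neq 0$, and set $f_t=f+tv$. Since $v(\pm1)=0$ we have $f_t(\pm1)=-1$, the expanding condition holds for $|t|$ small, and $f_t(0)\le 1$ for $|t|$ small: this is clear if $f(0)<1$, while if $f(0)=1$ then $c$ is preperiodic to the fixed point $-1$, the series for $J$ collapses to $J(f,v)=v(0)$, and the hypothesis $J(f,v)=0$ forces $v(0)=0$, so $f_t(0)\equiv 1$. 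Thus $t\mapsto f_t$ is a $C^k$ family of piecewise expanding $C^k$ unimodal maps with $f_0=f$, $\partial_t f_t|_{t=0}=v$, $J(f_0,v)=0$ and $J(f_0,w)\neq 0$. If $f$ is good, Theorem~\ref{create} applied to this family and to $w$ immediately furnishes a $C^{k-1+Lip}$ function $b$ with $b(0)=0$ and $b'(0)=0$ such that $\tilde f_t=f_t+b(t)w$ is a $C^{k-1+Lip}$ family of piecewise expanding $C^k$ unimodal maps, topologically conjugate with $f$ for all small $|t|$, with $\partial_t\tilde f_t|_{t=0}=v+b'(0)w=v$; this is exactly the assertion.

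It then remains to treat $f$ not good, so that $c$ is periodic of some prime period $p\ge 2$ (in particular $f(0)=f(c)<1$, since $f(c)=1$ would make $c$ non-periodic). By~(\ref{clarif1}) the hypothesis reads $J(f,v)=\partial_t f_t^p(c)|_{t=0}/Df^{p-1}(f(c))=0$. I would introduce $F(t,\theta)=(f_t+\theta w)^p(c)-c$, which is $C^k$ near $(0,0)$ because $f^i(c)\in(-1,0)\cup(0,1)$ for $1\le i\le p-1$; by the chain rule, as in the computation behind~(\ref{clarif1}), one gets $\partial_\theta F(0,0)=Df^{p-1}(f(c))\,J(f,w)\neq 0$ and $\partial_t F(0,0)=Df^{p-1}(f(c))\,J(f,v)=0$. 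The implicit function theorem then produces a $C^k$ function $b$, $b(0)=0$, with $F(t,b(t))\equiv 0$, and differentiating at $t=0$, using $\partial_t F(0,0)=0$, gives $b'(0)=0$. Put $\tilde f_t=f_t+b(t)w$: this is a $C^k$, hence $C^{k-1+Lip}$, family with $\partial_t\tilde f_t|_{t=0}=v$ and $\tilde f_t^p(c)=c$. For $|t|$ small the iterates $\tilde f_t^i(c)$, $1\le i\le p-1$, lie on the same side of $c$ as $f^i(c)$, so $c$ is periodic for $\tilde f_t$ of prime period exactly $p$ and with an itinerary independent of $t$; hence the set $R_t$ of critical relations is constant in $t$, and $A\Rightarrow B$ in Theorem~\ref{equiv} shows that each $\tilde f_t$ is topologically conjugate with $\tilde f_0=f$.

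I do not expect a genuine obstacle here. The good case is a direct invocation of Theorem~\ref{create}; in the non-good case the only points deserving care are the verification that $J(f,v)=0$ is precisely what forces $b'(0)=0$ (so that $\partial_t\tilde f_t|_{t=0}=v$ and not merely $v+b'(0)w$), and the check that the prime period and itinerary of $c$ remain locally constant, so that $R_t$ does not vary and Theorem~\ref{equiv} applies.
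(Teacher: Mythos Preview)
Your proof is correct. For the good case it is exactly the paper's argument: pick $w\in\BB^k(I)$ with $w(\pm1)=0$ and $J(f,w)\neq 0$, set $f_t=f+tv$, and invoke Theorem~\ref{create}.

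The difference is that you go further than the paper. The corollary as stated omits the word ``good,'' yet the paper's one-line proof simply applies Theorem~\ref{create}, which \emph{does} assume goodness; so the paper either intends the hypothesis implicitly or leaves a small gap. You close that gap by treating the non-good case (periodic $c$ with~(\ref{good}) failing) directly: the $C^k$ implicit function theorem applied to $F(t,\theta)=(f_t+\theta w)^p(c)-c$ gives a $C^k$ function $b$ with $b(0)=b'(0)=0$, and since the resulting family $\tilde f_t$ keeps $c$ periodic of the same prime period and itinerary, $R_t$ is constant and $A\Rightarrow B$ in Theorem~\ref{equiv} (which needs no goodness) yields the conjugacies. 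This is essentially the periodic-critical-point mechanism already used inside the proof of Theorem~\ref{create} (the functions $b_n$ there), stripped of the approximation step; in the non-good case it even gives a $C^k$ family rather than merely $C^{k-1+Lip}$. Your verification that $f_t\in\UU^k$, including the boundary case $f(0)=1$ via $J(f,v)=v(0)$, is also more careful than the paper's sketch.
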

\begin{proof} Choose  $w\in \BB^k(I)$ with $w(-1)=w(1)=0$ such that $J(f,w)\neq 0$. Consider the family 
$f_t = f + tv$ and apply Theorem \ref{create}.
\end{proof}

\begin{cor} Let $f_t$ be a $C^{k}$ family of   piecewise
expanding $C^k$ unimodal maps, $k\geq 2$, such that $f_t$ 
is topologically conjugate with $f_0$, for every $t$. 
Then there  exists a sequence of $C^{k}$-families 
$t \in (-\delta,\delta) \mapsto g_{t,n}$
such that 
\begin{itemize} 
\item[-] The map $g_{t,n}$ is topologically conjugate with $g_{0,n}$, for every $t$.
\item[-]  The critical point of $g_{0,n}$ is periodic.
\item[-]  The families $g_{t,n}$ converge to the family $f_{t}$ in the $\BB^{k-1}$ topology.
\end{itemize} 
\end{cor}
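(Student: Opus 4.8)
The plan is to obtain this statement as an immediate consequence of Theorem~\ref{create}, applied to the given family $f_t$ itself, with a suitably chosen auxiliary direction $w$.

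First I would set $f:=f_0$ and $v:=\partial_t f_t|_{t=0}$ and verify the hypotheses of Theorem~\ref{create}. Since $t\mapsto f_t$ is $C^k$ as a map into $\BB^k(I)$, we have $v\in\BB^k(I)$, and differentiating $f_t(\pm1)=-1$ at $t=0$ gives $v(-1)=v(1)=0$. The hypothesis that $f_t$ is topologically conjugate with $f_0$ for all small $t$ is exactly condition B of Theorem~\ref{equiv}, so the (unconditional) implication $B\Rightarrow D$ there gives $J(f_t,\partial_s f_s|_{s=t})=0$ for small $t$; in particular $J(f_0,v)=0$. I would then fix, as recalled in Section~3, some $w\in\BB^k(I)$ with $w(-1)=w(1)=0$ and $J(f_0,w)\neq0$ (e.g.\ a bump supported near $c$ with $w(c)\neq0$). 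Assuming, as is required to invoke Theorem~\ref{create}, that $f_0$ is good, all of its hypotheses are then met by $(f_0,v,w)$ and the family $f_t$.

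Next I would invoke Theorem~\ref{create}: it produces $\delta>0$, a \emph{unique} continuous $b\colon(-\delta,\delta)\to\real$ with $b(0)=0$ such that $\tilde f_t=f_t+b(t)w$ is topologically conjugate with $f_0$ for $|t|<\delta$, together with a sequence of $C^k$ families $g_{t,n}$ ($t\in(-\delta,\delta)$) such that $g_{t,n}$ is topologically conjugate with $g_{0,n}$ for all $t,n$, the critical point of $g_{0,n}$ is periodic, and $g_{t,n}\to\tilde f_t$ in the $\BB^{k-1}(I)$ topology for each $t$. The concluding step is the observation that the constant function $b\equiv0$ already satisfies the characterizing property of $b$ --- it is continuous, vanishes at $0$, and makes $f_t+b(t)w=f_t$ topologically conjugate with $f_0$ by the very hypothesis of the corollary --- so the uniqueness clause of Theorem~\ref{create} forces $b\equiv0$, hence $\tilde f_t=f_t$. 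Therefore the families $g_{t,n}$ converge to $f_t$ in the $\BB^{k-1}(I)$ topology and have the three listed properties, which is the asserted conclusion.

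I do not anticipate a genuine obstacle here: essentially all the work is contained in Theorems~\ref{equiv} and~\ref{create}. The only two points needing attention are (i) reading off $J(f_0,\partial_t f_t|_{t=0})=0$ from $B\Rightarrow D$ of Theorem~\ref{equiv} --- the fact that a topologically conjugate family is tangent to the horizontal distribution --- and (ii) using the uniqueness of $b$ in Theorem~\ref{create} to force the perturbation $b(t)w$ to vanish, which is precisely what converts ``$f_t$ is already a smooth deformation'' into ``$\tilde f_t=f_t$''. One should also carry along the goodness assumption on $f_0$, which Theorem~\ref{create} requires.
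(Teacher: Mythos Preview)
Your proposal is correct and follows essentially the same approach as the paper's own proof: apply Theorem~\ref{equiv} ($B\Rightarrow D$) to get $J(f_0,\partial_t f_t|_{t=0})=0$, invoke Theorem~\ref{create}, and use the uniqueness of $b$ to force $\tilde f_t=f_t$, so that the approximating families $g_{t,n}$ from the last clause of Theorem~\ref{create} converge to $f_t$. Your write-up is in fact more explicit than the paper's (which compresses the uniqueness step into ``We conclude that $f_t=\tilde f_t$''), and you rightly flag the implicit goodness assumption on $f_0$.
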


\begin{proof} Apply Theorem \ref{create} to the family $f_t$, noting that
Theorem \ref{equiv} implies that
$J(f_0,\partial_t f_t|_{t=0})=0$. 
So there exists a 
unique family $\tilde f_t$ such that $\tilde f_t$ 
is topologically conjugate with $f_0$, for every small $t$.
We conclude that $f_t=\tilde f_t$. 
To finish, apply the last claim in Theorem \ref{create}.
\end{proof}


\end{document}